\definecolor{blu}{rgb}{0,0,0.1}
\def\itemize{
  \ifnum\@itemdepth>3\@toodeep\else
    \advance\@itemdepth\@ne
    \edef\@itemitem{labelitem\romannumeral\the\@itemdepth}%
        \list{\csname\@itemitem\endcsname}%
      {\leftmargin=20pt\def\makelabel##1{\hss\llap{##1}}}
        \fi}
\renewenvironment{enumerate}{%
  \ifnum \@enumdepth >3 \@toodeep\else
      \advance\@enumdepth \@ne
      \edef\@enumctr{enum\romannumeral\the\@enumdepth}\list
      {\csname label\@enumctr\endcsname}{\usecounter
        {\@enumctr}\leftmargin=20pt\def\makelabel##1{\hss\llap{\upshape##1}}}\fi
}{%
  \endlist
}
\def\@settitle{\begin{center}%
  \baselineskip14\p@\relax
    \bfseries\@title
  \end{center}%
}
\def\@setauthors{%
  \begingroup
  \def\thanks{\protect\thanks@warning}%
  \trivlist
  \centering\footnotesize \@topsep30\p@\relax
  \advance\@topsep by -\baselineskip
  \item\relax
  \author@andify\authors
  \def\\{\protect\linebreak}%
  \authors%
  \ifx\@empty\contribs
  \else
    ,\penalty-3 \space \@setcontribs
    \@closetoccontribs
  \fi
  \endtrivlist
  \endgroup
}
\def\@setthanks{\def\thanks##1{\par##1}\thankses}
   \def\LaTeX{\leavevmode L\raise.42ex
       \hbox{\kern-.3em\size{\sf@size}{0pt}\selectfont A}\kern-.15em\TeX}
   \newcommand{\BibTeX}{{\rm B\kern-.05em{\sc
             i\kern-.025emb}\kern-.08em\TeX}}
\def\bbm[#1]{\mbox{\boldmath $#1$}}
   \newcommand{\e }{\varepsilon }
   \newcommand{\intr }{\int_{\R^2}}
   \newcommand{\into }{\int_{\Omega}}
   \newcommand{\R}{{\mathbb{R}}}
\newcommand{\cal}{\mathcal }  
   \newcommand{\N}{\mathbb{N}}
   \newcommand{\beq}{\begin{equation}}
   \newcommand{\eeq}{\end{equation}}
  \renewcommand{\theequation}{\arabic{section}.\arabic{equation}}
   \renewcommand{\theequation}{\thesection.\arabic{equation}}
   \newtheorem{theorem}{Theorem}[section]
   \newtheorem{proposition}[theorem]{Proposition}
   \newtheorem{lemma}[theorem]{Lemma}
   \newtheorem{corollary}[theorem]{Corollary}
   \newtheorem{remark}[theorem]{Remark}
   \newcommand{\bremark}{\begin{remark} \em}
   \newcommand{\eremark}{\end{remark} }
\DeclareMathOperator{\di}{d}
\DeclareMathOperator{\dist}{dist}
\def\bbm[#1]{\mbox{\boldmath $#1$}}
\def\bbm[#1]{\mbox{\boldmath $#1$}}
\begin{document}

\title[Blow-up solutions]
{\LARGE Multiple blow-up solutions for the\\ Liouville equation with singular data}

\author[Teresa D'Aprile]{\Large Teresa D'Aprile}
\address{Teresa D'Aprile, Dipartimento di Matematica, Universit\`a di Roma ``Tor
Vergata", via della Ricerca Scientifica 1, 00133 Roma, Italy.}
\email{daprile@mat.uniroma2.it }

\thanks{The author has been supported by  the Italian PRIN Research Project 2009
\textit{Metodi variazionali e topologici nello studio dei fenomeni non lineari}}

\begin{abstract} We study the existence of solutions with multiple concentration 
 to the following boundary value problem $$-\Delta u=\e^2 e^u-4\pi \sum_{p\in Z}\alpha_p \delta_{p}\;\hbox{ in } \Omega,\quad	u=0 \;\hbox{ on }\partial \Omega,$$  where $\Omega$ is a smooth  and bounded domain in $\R^2$,  $\alpha_{p}$'s are positive numbers, $Z\subset \Omega$ is a finite set, $\delta_p$ defines the Dirac mass at $p$,  and $\e>0$ is a small parameter. In particular we extend the result of Del-Pino-Kowalczyk-Musso (\cite{delkomu}) to the case of several singular sources. More precisely we prove that, under suitable restrictions on the weights $\alpha_p$,
a solution exists with a  number of  blow-up points $\xi_j\in \Omega\setminus Z$  up to $\sum_{p\in Z}\max\{n\in\N\,|\, n<1+\alpha_p\}$.

 \bigskip

\noindent {\bf Mathematics Subject Classification 2000:} 35B40, 35J20, 35J65

\noindent {\bf Keywords:} Liouville equation, blow-up solutions,
finite-dimensional reduction, max-min argument

\end{abstract}
\maketitle
\section{Introduction}
Let $\Omega$ be a bounded domain in $\R^2$ with a smooth boundary. In this paper we are concerned with the existence and the asymptotic analysis when the parameter $\e$ tends to $0$ of solutions in the distributional sense for the following Liouville-type problem involving singular sources:
\beq\label{sinh}\left\{\begin{aligned} &-\Delta u=\e^2 e^u-4\pi \sum_{p\in Z}\alpha_p \delta_{p}& \hbox{ in } &\Omega\\ &u=0 &\hbox{ on } &\partial \Omega\end{aligned}\right..\eeq Here $\delta_p$ denotes Dirac mass supported at $p$, $\alpha_{p}$'s are positive numbers and  $Z\subset \Omega$ is a finite set of  singular sources. 
Problem \eqref{sinh} is motivated by its links with the modeling of physical phenomena. In particular,  \eqref{sinh} arises in the study of vortices   in a planar model of Euler flows (see \cite{bapi}, \cite{delespomu}). In vortex theory the interest in constructing \textit{blowing-up} solutions is related to relevant physical properties, in particular  the presence of vortices with a strongly localised electromagnetic field.

The regular problem, when $Z=\emptyset$, has been widely considered in literature. The asymptotic behaviour of blowing up family 
of solutions 
     can be referred to the papers \cite{bapa}, \cite{breme}, \cite{lisha}, \cite{mawe},  \cite{nasu}, \cite{su}. 
   The analysis in these works yields that if $u_\e$ is    an unbounded family of solutions  of \eqref{sinh}  with $Z=\emptyset$ for which   $\e^2\into e^{u_\e} $ is uniformly bounded, then there is an integer $N\geq 1$ such that 
   $$\e^2\into e^{u_\e}dx\to 8\pi N\hbox{ as }\e\to 0.$$ Moreover there are points $\xi_1^\e,\ldots, \xi_N^\e\in\Omega$ which remain uniformly distant from the boundary $\partial\Omega$ and from one another such that  \beq\label{issue}\e^2e^{u_\e}-\sum_{j=1}^N \delta_{\xi_j^\e}\to 0\eeq in the measure sense. 
   The location of the     blowing-up points is well understood.
    Indeed, in \cite{nasu} and \cite{su} it is established that the $N$-tuple $(\xi_1^\e,\ldots, \xi_N^\e)$ converges, up to a subsequence, to a critical point of the functional 
  \beq\label{natur}\frac12\sum_{j=1}^NH(\xi_j,\xi_j)+\frac12\sum_{j,k=1\atop j\neq k}^NG(\xi_j,\xi_k).\eeq  
 Here 
   $G(x,y)$ is the Green's function of $-\Delta$ over $\Omega$ under Dirichlet
boundary conditions, namely   $G$ satisfies
\begin{equation*}
\left\{\begin{aligned}
&-\Delta_xG(x,y)=\delta_y(x) &\hbox{ }&x\in\Omega\\
&G(x,y)=0 &\hbox{ }&x\in\partial \Omega
\end{aligned}\right.,
\end{equation*}
and  $H(x,y)$ denotes its regular part:
$$H(x,y)=G(x,y)-\frac{1}{2\pi}\log\frac{1}{|x-y|}.$$
   The reciprocal issue, namely the existence of positive solutions with the property  \eqref{issue}, has been addressed in \cite{we} in the case where $\Omega$ is simply connected and where there is a single point of concentration (i.e. $N=1$). Baraket and Pacard (\cite{bapa}) established that for any nondegenerate critical point of \eqref{natur} a family of solutions $u_\e$ concentrating at the corresponding $N$-tuple as $\e\to 0$ does exist. See also \cite{chenlin} for an alternative proof in a compact two-dimensional Riemannian manifold.
   More generally, in \cite{delkomu} and \cite{espogropi} it is proved that any \textit{topologically nontrivial} critical point of \eqref{natur} generates a family of solutions with multiple blow-up points.  In particular, such a family of solutions is found in some special cases: for any $N\geq 1$, provided that  $\Omega$ is not simply connected  (\cite{delkomu}), and  for $N\in\{1,\ldots, h\}$ if $\Omega$ is a $h$-dumbell with thin handles (\cite{espogropi}). 
   
   We mention that an analogous blow-up analysis can be applied to sign changing solutions of the $\sinh$-Poisson equation $-\Delta u=\e^2 (e^u-e^{-u})$ allowing also negative concentration phenomena; in this case    the coupling  of bump pairs  is not described  anymore by the Green's function $G(\xi_j,\xi_k)$ in \eqref{natur},  but 
  the coupling    
  depends  on the  respective sign of the bumps and actually becomes $\tau_j\tau_kG(\xi_j,\xi_k)$ with $\tau_j=\pm 1$.  Multiple blow-up families of nodal solutions always exist for the $\sinh$-Poisson equation in a general  bounded and smooth domain $\Omega$. This result has become known first for exactly one positive and one negative blow-up points  in \cite{bapi} and, more recently, for  three and four alternate-sign blow-up points in \cite{bapiwe}. 
   Similar constructions have been carried out    for 
   the Henon-type equation $-\Delta u=\e^2 |x|^{2\alpha}(e^u-e^{-u})$ (\cite{mio}).

In all the above papers, only the regular case $Z=\emptyset$ is considered. Motivated by some singular limiting model equations  arising in the study of Chern-Simons vortex theory (see \cite{brebehe}, \cite{yang} and the references therein), in this paper we are interested in analysing \eqref{sinh} when $Z\neq \emptyset$. 

Concerning construction of solutions for the problem \eqref{sinh} in the singular  case $Z\neq\emptyset$, only  few results are available in literature.
 An extension to the problem \eqref{sinh} of the blow-up analysis carried out for $Z=\emptyset$
 is contained in \cite{bapa}. 
 It permits to perform an asymptotic analysis similar to the regular case. More precisely, let us  define the new functional 
\beq\label{psi0}\Psi(\bbm[\xi])= \frac12\sum_{j=1}^NH(\xi_j,\xi_j)- \sum_{p\in Z}\frac{\alpha_p}{2} \sum_{j=1}^N G(\xi_j,p)+\frac12\sum_{j,k=1\atop j\neq k}^NG(\xi_j,\xi_k)\eeq which is well defined in the set
\beq\label{emme}{\cal M}:=\Big\{\bbm[\xi]=(\xi_1,\ldots, \xi_N)\in (\Omega\setminus Z)^N\,\Big|\,  \xi_j\neq \xi_k\,\hbox{ if } j\neq k\Big\}.\eeq
Then the functional $\Psi$ replaces \eqref{natur} in locating the points in ${\cal M}$ where the concentration occurs for problem \eqref{sinh}.

Assuming that $Z$ is a singleton, i.e. $Z=\{p\}$, then   solutions which concentrate in the measure sense at  $N$ distinct points away from $p$ have been built in \cite{delkomu} provided that $N<1+\alpha_p$. 
 Moreover  in \cite{delespomu}, for any fixe $\alpha\in \N$,   it is proved the existence of  a solution to \eqref{sinh} with  $Z=\{p\}$ and $\alpha_p=\alpha$ for  a suitable $p\in \Omega$   with $\alpha+1$ blowing up points  at the vertices of a sufficiently tiny regular polygon  centered in $p$; moreover $p$ lies uniformly away from the boundary $\partial \Omega$ independently of $\alpha$  and its location is determined by the geometry of the domain.

In this paper we address the question of  which kind of 
concentration phenomena 
can be observed in $\Omega\setminus Z$  in the case of multiple sources.  
 One  expects that, if   several sources exist, then the more involved topology should generate a larger number of blow-up solutions than  the singleton case considered in  \cite{delkomu}.   As far as we know, up to now this kind of multiplicity result has remained unknown.

The aim of this paper is to establish the existence of  a family of solutions to \eqref{sinh} consisting of $\sum_{p\in Z}N_p$ blow up points in $\Omega\setminus Z$ as long as $N_p< 1+\alpha_p $ for all $p\in Z$, provided that the weights $\alpha_p$ avoid the integers $1,2,\ldots, N-1$. 
More precisely we will prove the following theorem.
\begin{theorem}\label{th1} Assume that $\Omega\subset \R^2$ is a bounded domain with a ${\cal C}^2$ boundary, $Z\subset \Omega$ is a finite set and $\alpha_p>0$ for all $p\in Z$. 
 Let $N\geq 2 $ be an integer satisfying the following:
\beq\label{akka2}\alpha_p\neq 1,\ldots, N-1\;\;\forall p\in Z,\eeq
\beq\label{akka1}\hbox{there exist integers } N_p\hbox{ with }0\leq N_p< 1+\alpha_p \hbox{ s.t. }N=\sum_{p\in Z}N_p.\eeq
 Then for $\e>0$ sufficiently small the problem \eqref{sinh} has a family of solutions $u_\e$      with the following property: 
there exist distinct points $\xi_1^\ast, \ldots ,\,\xi_N^\ast\in\Omega\setminus Z$   such that 
\beq\label{issue1}\e^2e^{u_\e}\to 8\pi \sum_{j=1}^N\delta_{\xi_j^\ast}\;\hbox{ as }\e\to 0\eeq in the measure sense. Furthermore $u_\e$ verifies 
$$u_\e(x)=-4\pi\sum_{p\in Z} \alpha_p G(x,p)+8\pi \sum_{j=1}^N G(x, \xi_j^\ast) +o(1)\;\hbox{ as }\e\to 0$$
 away from the concentration points $\xi_1^\ast, \ldots ,\,\xi_N^\ast$.
\end{theorem}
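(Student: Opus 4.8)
The plan is to turn the singular, distributional problem into a regular Liouville equation carrying a degenerate weight, solve that by a finite-dimensional Lyapunov--Schmidt reduction, and then locate the $N$ bubbles by producing a topologically stable critical point of the functional $\Psi$ in \eqref{psi0} through a max--min scheme whose feasibility is dictated exactly by \eqref{akka2} and \eqref{akka1}. First I would absorb the Dirac masses by setting $u=v-4\pi\sum_{p\in Z}\alpha_p G(\cdot,p)$, which recasts \eqref{sinh} as the regular problem
\[ -\Delta v=\e^2\,W(x)\,e^{v}\ \hbox{ in }\Omega,\qquad v=0\ \hbox{ on }\partial\Omega, \]
with weight $W(x)=\exp\big(-4\pi\sum_{p}\alpha_p H(x,p)\big)\prod_{p\in Z}|x-p|^{2\alpha_p}$, so that $\log W(x)=-4\pi\sum_{p\in Z}\alpha_p G(x,p)$. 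Thus $W$ is smooth and strictly positive on $\Omega\setminus Z$ and vanishes to order $2\alpha_p$ at each source $p$. Since the prospective blow-up points $\xi_1^\ast,\dots,\xi_N^\ast$ lie away from $Z$, near each of them the equation looks like the unweighted Liouville equation with a smooth positive coefficient, and the standard bubble machinery applies.

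Next I would set up the ansatz and the reduction. For a configuration $\bbm[\xi]=(\xi_1,\dots,\xi_N)\in{\cal M}$ I would take as first approximation the sum of the Dirichlet projections of the standard bubbles $U_{\delta_j,\xi_j}(x)=\log\frac{8\delta_j^2}{(\delta_j^2+|x-\xi_j|^2)^2}$, with concentration parameters $\delta_j$ tuned, in terms of $\e$, $W(\xi_j)$ and $\bbm[\xi]$, so that each bubble carries asymptotic mass $8\pi$. Writing $v=V_{\bbm[\xi]}+\phi$, linearizing around $V_{\bbm[\xi]}$, and working orthogonally to the $3N$-dimensional approximate kernel generated by the dilation and translation invariances of each bubble, a contraction/implicit-function argument yields a unique small correction $\phi=\phi(\bbm[\xi])$ with a quantitative $o(1)$ bound, uniform on compact subsets of ${\cal M}$. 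Solvability of \eqref{sinh} is then equivalent to finding a critical point of the reduced functional $J_\e(\bbm[\xi])$ obtained by evaluating the energy at $V_{\bbm[\xi]}+\phi(\bbm[\xi])$; a careful expansion, in which $\log W(\xi_j)=-4\pi\sum_p\alpha_p G(\xi_j,p)$ produces precisely the singular-source contribution, would give $J_\e(\bbm[\xi])=a_\e+b_\e\,\Psi(\bbm[\xi])+o(b_\e)$ in $C^1_{loc}({\cal M})$, with $a_\e,b_\e$ explicit constants. Hence any critical point of $\Psi$ that is stable under $C^1$-small perturbations persists as a critical point of $J_\e$ for $\e$ small, and the associated $u_\e$ concentrates with the behaviour claimed in \eqref{issue1}.

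The genuinely new ingredient beyond \cite{delkomu} is to manufacture such a stable critical point of $\Psi$ realizing $N=\sum_p N_p$ separated bubbles, for which I would run a max--min argument. Writing $G(x,y)=-\frac{1}{2\pi}\log|x-y|+H(x,y)$, one computes that when a subcluster of $m$ points approaches a source $p$ at scale $r$, the $r$-dependent part of $\Psi$ behaves like $\frac{m}{4\pi}(\alpha_p-m+1)\log r$, while $\Psi\to+\infty$ as two points collide and $\Psi\to-\infty$ as a point reaches $\partial\Omega$. Condition \eqref{akka1}, i.e. $N_p<1+\alpha_p$, forces $\alpha_p-m+1>0$ for every $m\le N_p$, so the collapse of any admissible subcluster onto its source is a $-\infty$ barrier; one then \emph{maximizes} over the variables measuring proximity to the sources and to $\partial\Omega$ (kept interior by these barriers) and \emph{minimizes} over the variables measuring mutual proximity of the points (kept interior by the collision $+\infty$). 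Condition \eqref{akka2}, $\alpha_p\notin\{1,\dots,N-1\}$, is exactly what rules out the borderline exponent $\alpha_p=m-1$ for all $1\le m\le N$, for which the $\log r$ term degenerates and a competing, energetically comparable collapsed configuration could spoil the strict max--min inequalities.

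The main obstacle is therefore the verification that, under \eqref{akka2}--\eqref{akka1}, the resulting max--min value is finite, attained at an interior separated configuration, and topologically stable, so that it survives the $C^1$-perturbation $J_\e=a_\e+b_\e\Psi+o(b_\e)$. Concretely I would fix a compact admissible set, carved out by thresholds on $\dist(\xi_j,Z)$, $\dist(\xi_j,\partial\Omega)$ and $|\xi_j-\xi_k|$ adapted to the partition $N=\sum_p N_p$, and use the three limiting behaviours above to show that along every boundary face of this set the inner minimum lies strictly below, and the outer maximum strictly above, the candidate max--min level; the arithmetic conditions guarantee these inequalities are strict and $\e$-independent. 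Once this interior, stable critical point of $\Psi$ is secured, the reduction delivers $u_\e$ and the concentration statement, completing the proof.
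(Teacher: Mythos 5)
Your first two steps coincide with the paper's Section 2: the substitution $u=v-4\pi\sum_{p}\alpha_p G(\cdot,p)$ gives exactly the weighted problem \eqref{proreg} with $a=W$, and the Lyapunov--Schmidt reduction with the $C^1$-expansion of the reduced energy in terms of $\Psi$ is Proposition \ref{exp1} (the paper fixes the dilation parameters a priori by \eqref{formula} and reduces only in the $2N$ translation directions rather than your $3N$-dimensional kernel, but that is inessential). Your single-scale cluster computation $\frac{m}{4\pi}(\alpha_p-m+1)\log r$ and the roles you assign to \eqref{akka1} and \eqref{akka2} also reproduce the heuristics behind the paper's sets $K$, $K_0$ and Lemma \ref{prevenar}. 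The genuine gap is in your compactness step. You propose to conclude by pure energy-level separation --- ``along every boundary face of this set the inner minimum lies strictly below, and the outer maximum strictly above, the candidate max--min level'' --- justified by the assertion that \eqref{akka2} excludes energetically comparable collapsed configurations. That assertion is false: \eqref{akka2} only kills the \emph{single-scale} balanced collapse, while multi-scale collapses with bounded energy persist. For instance, if $\alpha_p<1$ (allowed by \eqref{akka2}) and two points approach $p$ at distances $r_1=r$ and $r_2=r^{\alpha_p/(2-\alpha_p)}$, then $r_1\ll r_2\to 0$, $|\xi_1-\xi_2|\sim r_2$, and with this choice of $r_2$ the divergent part of $\Psi$ cancels identically:
\[
\frac{1}{2\pi}\Big(-\frac{\alpha_p}{2}\log\frac{1}{r_1}-\frac{\alpha_p}{2}\log\frac{1}{r_2}+\log\frac{1}{r_2}\Big)=0 ,
\]
so $\Psi$ stays bounded along a family of configurations that crosses every threshold face on $\dist(\xi_j,Z)$. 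Hence no choice of thresholds yields strict level inequalities, and an energy-only argument cannot prevent the max--min level from being attained on the boundary of the admissible set.

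This is precisely why the paper does something more delicate: it takes ${\cal D}=\{\Phi>-M\}$ with $\Phi$ as in \eqref{didi} (note the flipped sign of the interaction term, making $\partial{\cal D}$ a single level set rather than a union of faces), and proves property (P3): at every point of $\partial{\cal D}$ at the level $\Psi^*$, the gradient $\nabla\Psi$ has a nonzero component tangent to $\partial{\cal D}$, so the deformation argument can slide along the boundary instead of requiring level separation. Establishing (P3) is the technical heart of the proof: a five-case contradiction analysis of the Lagrange-multiplier identity \eqref{leone} along collapsing sequences, with a multi-scale decomposition of clusters near $p$ and Pohozaev-type identities obtained by pairing with $\xi_j-p$; and it is only inside that analysis --- in \eqref{gra1}, where the alternative $\beta_2^n=o(1)$ would force $\#I_1-1=\alpha_p$ --- that hypothesis \eqref{akka2} is actually invoked. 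None of this gradient analysis appears in your plan. A lesser but real additional issue: ``maximize over source/boundary proximity, minimize over mutual distances'' is not a well-posed scheme, since these variables are coupled; the paper instead fixes a homotopy class rel $K_0$ and obtains the linking by a degree-theoretic argument through radial projection onto fixed rays (Proposition \ref{loweresti}), which is what makes the max--min value both well defined and bounded independently of $M$.
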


Let us observe that \eqref{akka2} implies $\alpha_p\neq 1$; moreover \eqref{akka2} is always verified if $\alpha_p\not \in \N$ for all $p\in Z$. 
 In particular, the assumptions \eqref{akka2}-\eqref{akka1} are satisfied  for $2\leq N<1+\alpha_p$ in the singleton case $Z=\{p\}$, so  
  Theorem \ref{th1}   covers the result known in \cite{delkomu}.
  
  We point out that in the above result  
   concentration occurs 
    at points different from the location of the sources. The problem of finding solutions with additional concentration around the sources is of different nature. In case they exist, each blow-up point  provides a contribution of $8\pi(1+\alpha_p)\delta_p$ in the limit of \eqref{issue1}, see \cite{bachelinta},  \cite{bata}, \cite{ta1}, \cite{ta2}.  This problem  requires a different asymptotic analysis which allows in \cite{espo} the construction of solutions concentrating at any prescribed subset $\tilde{Z}\subset Z$ under the additional assumption $\alpha_p\not \in \N$ for all $p\in Z$. 
   
  The proof of Theorem \ref{th1} relies on the finite dimensional variational reduction developed in \cite{delkomu}, which 
reduces the problem of constructing blowing up families of solutions for \eqref{sinh} to the problem of finding critical points of a functional depending on the blow-up points $\xi_j$. More specifically, the reduced functional is ${\cal C}^1$-close to the functional $\Psi$
  given by \eqref{psi0}. 
In this paper  we will use a variational argument to characterize a \textit{topologically nontrivial} cri\-ti\-cal point of $\Psi$ which will \textit{survive} small ${\cal C}^1$-perturbations. The  notion of \textit{topologically nontrivial} critical point we use here was introduced in \cite{delfe} in the analysis of concentration phenomena in nonlinear Schr\"odinger equations.  Similar notions have been  widely used to detect multiple-bubbling solutions  in other related singularly perturbed problems.

In the expression of $\Psi$ we recognize the bump interaction terms $G(\xi_j,\xi_k)$ which raise the energy to $+\infty$  if any two bumps get very close, while the Robin function $H(\xi_j,\xi_j)$ describes the interaction of an individual bump with the boundary and pulls the energy to $-\infty$ if the point approaches the boundary. 
The presence of the singular set $Z$ in $\Omega$ which repel the points produces the required nontrivial topology which intuitively yields an equilibrium generated  a max-min situation for points as distant as possible from one to another, as well as from the boundary and from  the singularities. 
Indeed, we will prove  that a $N$-bump solution exists for any $N\geq 2$ satisfying \eqref{akka2}-\eqref{akka1}, with bumps associated to such an equilibrium.

In order to understand the mechanism which drives energy equilibrium, we briefly outline the variational argument, which consists of two parts.  First, we construct   a special class $K$ of initial geometric configurations of the blow-up points: more precisely,  $K$ consists  of the configurations in ${\cal M}$ obtained by placing $N_p$ points on a small annulus centered in $p$  for any $p\in Z$; such a geometrical construction  requires that $N$ can be split as in \eqref{akka1}. The class $K$ has the crucial property that the terms $H(\xi_j,\xi_j)$ and $G(\xi_j,p)$ are uniformly bounded, then the restriction $\Psi\big|_{K}$ is bounded from below and satisfies  $\Psi\big|_{K}\to +\infty$ as $\bbm[\xi]\to \partial {\cal M}$. Therefore $\Psi$ attains a global minimum in $K$. Secondly, we apply a localized max-min technique on suitable deformation in ${\cal M}$ of the class $K$. 

Once we have provided the functional $\Psi$ with a suitable local max-min structure, we   need some compactness to conclude that the max-min value actually gives the existence of a critical point. This is the most technical part of the proof and requires the additional assumption \eqref{akka2}.
Roughly speaking, 
the major problem in proving the validity of compactness
is to exclude possible 
collisions, i.e. 
configurations $\bbm[\xi] = (\xi_1, \ldots, \xi_N)$ in $\cal M$ with at least two components
converging towards the same point of $\Omega$. 
More precisely, compactness may fail when a collision arises near  some singularity  $p\in Z$ for bounded values of  $\Psi$ and $\nabla \Psi$. Such  \textit{crucial} collisions do occur when $\alpha_p=n\in \{1,\ldots, N-1\}$ for some $p\in Z$ and $n+1$ of the $N$ points are arranged on a tiny polygonal  configuration centered at $p$. This explains the assumption \eqref{akka2}. The result provided by Theorem \ref{th1} is optimal in the sense that if some weight  $\alpha_p$ coincides with one of the integers $1,\ldots, N-1$, then  the method breaks down due to lack of compactness.

It is interesting to notice that the function $\Psi$ appears frequently in the study of problems arising from physics or geometry. For instance, in the context of Ginzburg-Landau vortices $\Psi$   is also called the \textit{renormalized energy}. It is a generalization of the classical Kirchhoff-Routh path function for $\Omega=\R^2$ and  describes the dynamics of point vortices in an incompressible planar fluid.  The  \textit{renormalized energy} is a Hamiltonian and  the motion of the vortices is governed by the corresponding Hamiltonian equation. Therefore   
the problem of finding critical points of $\Psi$ corresponds to  finding  all relative equilibrium configurations of $N$-point vortices in $\Omega$. We refer to the book \cite{brebehe} for
a detailed work in this direction.

Finally we point out that  $\Psi$ occurs as limit equation in a variety of problems involving concentration phenomena with multi point concentration. We mention \cite{weiyezhou} for the context of Liouville equation with anisotropic coefficients, and  \cite{espowei} for the $\sinh$-Gordon equation. See also   
\cite{espomupi1}, \cite{espomupi2}, \cite{espopiwei} 
 where the role of the function $\Psi$  appears in the construction of solutions with multiple concentration for a planar elliptic equation involving nonlinearities with large exponent. 
 
The paper is organized as follows. In Section 2 we sketch the finite-dimensional reduction method developed in \cite{delkomu}. Section 3 is devoted to solving the reduced problem by carrying out the max-min procedure. Finally in the Appendix A we collect some properties of the Green's function which are usually referred to throughout the paper.

\begin{remark} In the sequel of the paper, by  $c$, $C$ we will denote generic  positive constants. The value of $c,\, C$ is allowed to vary from line to line and also in the same formula.\end{remark}

\section{The reduced functional}
The proof of Theorem \ref{th1}  is based on the \textit{finite dimensional reduction}
procedure which has been used for a wide class of singularly perturbed problems. 

Let us observe that, setting  $v$ the regular part of $u$, namely $$v= u+4\pi \sum_{p\in Z}\alpha_p G(x, p),$$  problem \eqref{sinh} is then equivalent to solving the following (regular) boundary value problem
\beq\label{proreg}\left\{\begin{aligned} & -\Delta v=\e^2a(x)e^v&\hbox{ in }&\Omega\\ &v=0&\hbox{ on }&\partial \Omega\end{aligned}\right.,\eeq
where $a(x)$ is the function \beq\label{aaa}a(x)=e^{-4\pi \sum_{p\in Z} \alpha_pH(x,p)}\Pi_{p\in Z} |x-p|^{2\alpha_p}.\eeq
Here $G$ and $H$ are Green's function and its regular part as defined in the introduction. 

 In \cite{delkomu} the reduction process has been applied to  the problem \eqref{proreg}  for a more general function $a$.
We sketch
the procedure here and refer to  \cite{delkomu}  for details.

 The first object is to construct an approximate solution for  \eqref{proreg}. To this aim, 
let us introduce the  radially symmetric solutions of the limiting problem 
$$\left\{\begin{aligned}&-\Delta \omega=e^\omega\;\hbox{ in } \R^2\\ &\intr e^\omega dx<+\infty\end{aligned}\right.,$$
which are given by the one-parameter  family of functions$$
\omega_{\mu}(x)
 =\log \frac{8\mu^2}{(\mu^2+|x|^2)^2},\quad \mu>0
$$ (see \cite{chenlin}).  We point out that for $\xi\in \Omega\setminus Z$  the function 
$$U_{\e,\mu, \xi}(x)=\log\frac{8\mu^2}{(\mu^2\e^2+|x-\xi|^2)^2 a(\xi)}=\omega_{\mu}\Big(\frac{x-\xi}{\e}\Big)+4\log\frac{1}{\e}-\log a(\xi)$$ satisfies$$-\Delta U=a(\xi)\e^2 e^U\;\hbox{ in }\R^2.$$
Let $N$ be a positive integer and  define the configuration space
$$\cal{O} :=
\left\{ \bbm[\xi]
      =( \xi_1,\ldots,\xi_N)\in\Omega^N\,\big|\,
 \di(\xi_j)>\delta\,\, \&\,\,|\xi_j-p|>\delta \;\forall j,\,\forall p\in Z,\; \;|\xi_j-\xi_k|\geq \delta\hbox{ if }j\neq k
\right\}
$$
where $\delta>0$ is a sufficiently small number and  ${\rm{d}}(x):={\dist}(x,\partial\Omega)$. It is natural to look for a solution $v$ of \eqref{proreg} of the form $v\approx \sum_{j=1}^NU_{\e,\mu_j,\xi_j}$               for a certain set of points $\bbm[\xi]=(\xi_1,\ldots, \xi_N)\in {\cal O}$ and suitable scalars $\mu_1,\ldots, \mu_N>0$. 
So, we would like to take $\sum_{j=1}^NU_{\e,\mu_j,\xi_j}$ as our first approximate solution of the problem \eqref{proreg}. To obtain a better first approximation, we need to modify it in order to satisfy the zero boundary condition. 
Precisely, we
consider the projections ${\cal P}_\Omega U_{\e,\mu, \xi}$ onto the space $H^1_0(\Omega)$ of
$U_{\e,\mu, \xi}$, where the projection  ${\cal P}_\Omega:H^1(\R^N)\to H^1_0(\Omega)$ is
defined as the unique solution of the problem
$$
\left\{
\begin{aligned}
&\Delta {\cal P}_\Omega v=\Delta v &\hbox{ in } &\Omega\\
& {\cal P}_\Omega v=0&\hbox{ on }&\partial \Omega
\end{aligned}\right..$$
Finally, in order to make the approximation  $\sum_{j=1}^N{\cal P}_\Omega U_{\e,\mu_j, \xi_j}$ more accurate near $\xi_j$, we can improve the construction by further adjusting the numbers $\mu_j$. Actually, the parameter $\mu_j$ will be a priori prescribed in terms of the point $\xi_j$ by the formula:
\beq\label{formula}\log 8\mu_j^2=\log a(\xi_j) +8\pi H(\xi_j,\xi_j)+8\pi\sum_{k=1\atop k\neq j}^NG(\xi_j,\xi_k).\eeq Then we consider the first approximation 
$$\sum_{j=1}^N{\cal P}_\Omega U_{\e,\xi_j}$$ where $U_{\e,\xi_j}=U_{\e,\mu_i,\xi_j}$ with the numbers $\mu_j=\mu_j(\xi_j)$ defined by \eqref{formula}. Let us analyse the asymptotic behavior of ${\cal P}_\Omega U_{\e,\xi_j}$ with these choices of $\mu_j$ as $\e\to 0^+$. For any  $x\in\partial \Omega$ we compute     \beq\label{sater}\begin{aligned}{\cal P}_\Omega U_{\e, \xi_j}-U_{\e, \xi_j}-8\pi H(x, \xi_j)&=-\log\frac{8\mu_j^2}{(\mu_j^2\e^2+|x-\xi_j|^2)^2 a(\xi_j)}+4\log\frac{1}{|x-\xi_j|}\\ &=-\log\frac{8\mu_j^2}{a(\xi_j)}+O(\e^2)\end{aligned}\eeq uniformly on  $\partial \Omega$.
So, the functions ${\cal P}_\Omega U_{\e, \xi_j}-U_{\e,\xi_j}$ and $8\pi H(x, \xi_j)$ are harmonic in $\Omega$ and satisfy \eqref{sater} on the boundary. Then the maximum principle applies and gives \beq\label{estione}{\cal P}_\Omega U_{\e, \xi_j}=U_{\e, \xi_j}+8\pi H(x, \xi_j)-\log\frac{8\mu_j^2}{a(\xi_j)}+O(\e^2)\,\hbox{ uniformly in }\Omega.\eeq
Moreover for any $k$ a direct computation yields
$$U_{\e, \xi_k}=
4\log\frac{1}{|x-\xi_k|} +\log\frac{8\mu_k^2}{a(\xi_k)}+O(\e^2) \hbox{ uniformly for } |x-\xi_k|\geq \frac{\delta}{2} $$ by which 
\beq\label{estitwo}{\cal P}_\Omega U_{\e, \xi_k}=
8\pi G(x, \xi_k) +O(\e^2) \hbox{ uniformly for } |x-\xi_k|\geq \frac{\delta}{2} .\eeq
Therefore, fixed $j\in\{1,\ldots, N\}$, combining \eqref{estione} and \eqref{estitwo} we have 
$$\sum_{k=1}^N{\cal P}_\Omega U_{\e,\xi_k}=U_{\e,\xi_j}+8\pi H(x, \xi_j)-\log\frac{8\mu_j^2}{a(\xi_j)}+8\pi\sum_{k=1\atop k\neq j}^N G(x,\xi_k)+O(\e^2)$$
uniformly for $|x-\xi_j|\leq \frac{\delta}{2}.$
The above estimate implies that $\sum_{k=1}^N{\cal P}_\Omega U_{\e,\xi_k}\sim U_{\e,\xi_j}$ for $x$ close to $\xi_j$  thanks to the choice of $\mu_j$ in \eqref{formula}. This justifies the \textit{good} choice of the numbers $\mu_j$ in \eqref{formula}. 
Now  we look for a solution
to \eqref{proreg} in a small neighbourhood of the first approximation, i.e. a solution of the form
$$
v:=\sum_{j=1}^N {\cal P}_\Omega  U_{\e,\xi_j}+\phi,
$$
where the rest term $\phi$ is small. To carry out the construction of a solution of this type,
we first introduce an intermediate problem as follows.
Let us set
$$Z_{j,k}(x)=\frac{\partial  }{\partial \xi_j^k}\omega_{\mu_j }\Big(\frac{x-\xi_j}{\e}\Big), \quad j=1,\ldots, N,\;k=1,2;$$
here we denote by $\xi_j^k$ the $k$-th component of $\xi_j$. Additionally, let us consider a sufficiently large radius $R_0>0$ and a nonnegative function $\chi$ such that $\chi=1$ if $\rho<R_0$ and $\chi(\rho)=0$ if $\rho>R_0+1$ and set $\chi_\e(\rho)=\chi(\frac{\rho}{\e}).$ 
Then it is convenient to solve
as a first step the problem for $\phi$ as a function of $\e$ and $\bbm[\xi]$.
This turns out to be solvable for any choice of points $\xi_j$,
provided that $\e$ is sufficiently small. The following result was established in \cite{delkomu}.

\begin{lemma}\label{reg}
There exists $\e_0>0$ and a constant $C>0$ such that for each $\e\in (0,\e_0)$ and each
$\bbm[\xi]\in {\cal O}$ there exists a unique
$\phi_{\e,\hbox{\scriptsize$\bbm[\xi]$} }
 \in H^1_0(\Omega)$ satisfying
\begin{equation}\label{sati1}
\Delta\bigg(\sum_{j=1}^N {\cal P}_\Omega  U_{\e,\xi_j}+\phi\bigg)
 +\e^2a(x)\exp \bigg(\sum_{j=1}^N {\cal P}_\Omega  U_{\e,\xi_j}+\phi\bigg)
 =\!\!\sum_{j=1,\ldots, N\atop k=1,2}c_{j,k} \chi_\e(|x-\xi_j|)Z_{j,k}
\end{equation}for some $c_{j,k}\in \R$ and 
\begin{equation}\label{sati11}\into \chi_\e(|x-\xi_j|)Z_{j,k} \phi\, dx=0\qquad \forall j=1,\ldots, N,\;k=1,2,\eeq
\begin{equation}\label{sati2}
\|\phi\|_\infty< C\e|\log \e|.
\end{equation}
 Moreover the map $\bbm[\xi]\in {\cal O}\mapsto\phi_{\e,\hbox{\scriptsize$\bbm[\xi]$} }\in H^1_0(\Omega)$ is ${\mathcal C}^1$.
\end{lemma}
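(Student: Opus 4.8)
The plan is to run the standard Lyapunov--Schmidt scheme, viewing \eqref{sati1}--\eqref{sati11} as a nonlinear perturbation of the operator linearized around the approximate solution $V:=\sum_{j=1}^N{\cal P}_\Omega U_{\e,\xi_j}$. Writing the unknown as $V+\phi$ and splitting $e^\phi=1+\phi+(e^\phi-1-\phi)$, I would recast \eqref{sati1} as
$$L(\phi)=-E-N(\phi)+\sum_{j,k}c_{j,k}\,\chi_\e(|x-\xi_j|)Z_{j,k},$$
where $L(\phi):=\Delta\phi+\e^2a(x)e^V\phi$ is the linearized operator, $E:=\Delta V+\e^2a(x)e^V$ is the error of the approximation, and $N(\phi):=\e^2a(x)e^V(e^\phi-1-\phi)$ gathers the superquadratic terms. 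The whole construction is carried out in a weighted $L^\infty$-norm $\|\cdot\|_*$ tailored to the concentration at the points $\xi_j$, from which the plain bound \eqref{sati2} is recovered by standard elliptic regularity at the very end. The first computation is to estimate $E$: near each $\xi_j$ the expansions \eqref{estione}--\eqref{estitwo}, together with the a priori choice \eqref{formula} of $\mu_j$, are designed so that the leading singular contributions cancel, giving $\|E\|_*\le C\e$ uniformly for $\bbm[\xi]\in{\cal O}$.

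The technical heart is the uniform invertibility of $L$ under the orthogonality constraints \eqref{sati11}. I would prove that for every $h$ with $\|h\|_*<\infty$ there is a unique pair $(\phi,(c_{j,k}))$ solving $L(\phi)=h+\sum_{j,k}c_{j,k}\chi_\e Z_{j,k}$ together with \eqref{sati11}, and satisfying
$$\|\phi\|_*\le C\,|\log\e|\,\|h\|_*$$
uniformly in $\e$ and $\bbm[\xi]$. The mechanism behind this is the nondegeneracy of the standard bubble: the only bounded solutions on $\R^2$ of the rescaled limit equation $\Delta\psi+e^{\omega_\mu}\psi=0$ are the three functions $\partial_\mu\omega_\mu$ and $\partial_{x_1}\omega_\mu,\partial_{x_2}\omega_\mu$. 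Since the dilation parameter $\mu_j$ is frozen in advance through \eqref{formula}, the dilation mode is automatically suppressed, and it suffices to impose orthogonality against the two translation modes $Z_{j,k}$, $k=1,2$, which is precisely \eqref{sati11}. The a priori estimate itself is proven by contradiction: were it to fail, a normalized sequence $\phi_n$ (with $\|\phi_n\|_*=1$ but $\|h_n\|_*\to0$) would, after rescaling $x\mapsto(x-\xi_j)/\e$ around each bubble, converge to a bounded kernel element orthogonal to all three functions above, hence to $0$; the exterior region, where $\e^2a(x)e^V$ is negligible, is controlled by the maximum principle, so $\|\phi_n\|_*\to0$, contradicting the normalization.

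With the solution operator $T:=L^{-1}$ of the projected linear problem at hand, I would solve the full equation by a fixed point argument, rewriting it as
$$\phi=-T\bigl(E+N(\phi)\bigr)=:{\cal A}(\phi).$$
The quadratic estimate $\|N(\phi)\|_*\le C\|\phi\|_*^2$, together with the local Lipschitz bound $\|N(\phi_1)-N(\phi_2)\|_*\le C(\|\phi_1\|_*+\|\phi_2\|_*)\|\phi_1-\phi_2\|_*$, shows that ${\cal A}$ maps the ball $\{\|\phi\|_*\le C\e|\log\e|\}$ into itself and is a contraction there as soon as $\e_0$ is small, using $\|E\|_*\le C\e$ and the $|\log\e|$ loss in $T$. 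The Banach fixed point theorem then produces the unique $\phi_{\e,\bbm[\xi]}$ with the asserted bound, and the ${\cal C}^1$ dependence $\bbm[\xi]\mapsto\phi_{\e,\bbm[\xi]}$ follows from the implicit function theorem applied to $F(\bbm[\xi],\phi):=\phi+T(E+N(\phi))$, whose differential in $\phi$ is invertible uniformly by the linear theory, all the ingredients depending smoothly on $\bbm[\xi]$.

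I expect the main obstacle to be the uniform linear invertibility of the second step: correctly separating the frozen dilation mode from the translation modes, handling the mutual interaction of the $N$ bubbles in the blow-up analysis, and simultaneously controlling the exterior region. This is exactly the nondegeneracy-driven estimate underlying the results quoted from \cite{delkomu}, and every constant must be shown uniform in $\bbm[\xi]\in{\cal O}$ and in $\e\in(0,\e_0)$.
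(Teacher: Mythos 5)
First, note what the paper itself does with this statement: Lemma \ref{reg} is not proved here at all, it is imported verbatim from \cite{delkomu}, where the full Lyapunov--Schmidt construction is carried out. Your proposal is therefore to be measured against that argument, and in outline it reproduces it faithfully: the same splitting $L(\phi)=-E-N(\phi)+\sum_{j,k}c_{j,k}\chi_\e Z_{j,k}$, the same weighted norms, the error bound $\|E\|_*\le C\e$, a contraction mapping in a ball of radius $C\e|\log\e|$, and the implicit function theorem for the ${\cal C}^1$ dependence on $\bbm[\xi]$.

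There is, however, a genuine gap in your treatment of the key step, the uniform linear estimate $\|\phi\|_*\le C|\log\e|\|h\|_*$. The orthogonality conditions \eqref{sati11} involve \emph{only} the two translation modes $Z_{j,1},Z_{j,2}$. Consequently, in your blow-up contradiction argument the rescaled limit $\psi$ is a bounded solution of $\Delta\psi+e^{\omega_\mu}\psi=0$ orthogonal only to $\partial_{x_1}\omega_\mu$ and $\partial_{x_2}\omega_\mu$; nothing prevents it from being a nonzero multiple of the dilation mode $z_0=\partial_\mu\omega_\mu$, so your conclusion that the limit is ``orthogonal to all three functions above, hence to $0$'' does not follow. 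The claim that freezing $\mu_j$ through \eqref{formula} ``automatically suppresses'' the dilation mode is not a mathematical argument: the kernel of the limiting linearized operator contains $z_0$ regardless of how $\mu_j$ is prescribed (the choice \eqref{formula} serves to make the error $E$ small, not to remove a kernel direction). There is also an internal inconsistency: if the naive contradiction argument worked as you describe, it would yield the estimate \emph{without} the $|\log\e|$ factor. In \cite{delkomu} this is exactly the delicate point: one first proves $\|\phi\|_\infty\le C\|h\|_*$ under the \emph{additional} orthogonality to the (cut-off) dilation modes $Z_{j,0}$, and then removes these extra conditions by writing $\phi=\tilde\phi+\sum_j d_j\hat z_{j0}$, where the $\hat z_{j0}$ are corrections of the dilation modes built from the second, logarithmically growing radial solution of the limit linearized equation so as to accommodate the Dirichlet condition; the estimate of the coefficients $d_j$ is precisely where the $|\log\e|$ loss originates. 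Without this step your linear theory, and hence the entire fixed-point scheme resting on it, is incomplete.
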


After problem \eqref{sati1}--\eqref{sati2} has been solved, then we find a solution to the problem \eqref{proreg} if $\bbm[\xi]\in {\cal O}$ is such that $c_{j,k}=c_{j,k}(\bbm[\xi])=0$ for all $j,\,k$. This problem is actually variational. Indeed, let us consider  the following energy functional associated with 
\eqref{proreg}:
\begin{equation}\label{func1}
I_\e(v)=\frac12\into |\nabla v|^2 dx-\e^2\into a(x)e^v\, dx ,\quad v\in H^1_0(\Omega).
\end{equation}
Solutions of \eqref{proreg} correspond to critical points of $I_\e$. Now we introduce the new
functional
\begin{equation}\label{jeps}
J_\e:{\cal O}\to \R, \quad J_\e(\bbm[ \xi])
 =I_\e\bigg(\sum_{j=1}^N {\cal P}_\Omega  U_{\e,\xi_j}
       +\phi_{\e,\hbox{\scriptsize$\bbm[\xi]$} }\bigg)
\end{equation}
where $\phi_{\e,\hbox{\scriptsize$\bbm[\xi]$}}$ has been constructed in
Lemma \ref{reg}. The next lemma has been proved in \cite{delkomu} and reduces the 
problem \eqref{proreg} to the one of finding critical points of the functional $J_\e$.

\begin{lemma}\label{relation}
 $\bbm[\xi]  \in {\cal O}$ is a critical point of $J_\e $ if and
only if the corresponding function
$v_\e=\sum_{j=1}^N {\cal P}_\Omega  U_{\e,\xi_j}
       +\phi_{\e,\hbox{\scriptsize$\bbm[\xi]$} }$
is a solution of (\ref{proreg}). Consequently, the function $$u_\e=-4\pi \sum_{p\in Z}\alpha_p G(\cdot, p)+\sum_{j=1}^N {\cal P}_\Omega  U_{\e,\xi_j}
       +\phi_{\e,\hbox{\scriptsize$\bbm[\xi]$} }$$ is a solution of the original problem \eqref{sinh}.
\end{lemma}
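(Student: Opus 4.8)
The plan is to reduce the whole statement to a gradient computation for $J_\e$ together with the invertibility of a single coefficient matrix. The last assertion is immediate once the first is proved: by the equivalence between \eqref{sinh} and \eqref{proreg} recalled at the beginning of this section, namely $u=v-4\pi\sum_{p\in Z}\alpha_p G(\cdot,p)$, a solution $v_\e$ of \eqref{proreg} produces the stated $u_\e$. So I concentrate on the claim that $\bbm[\xi]$ is a critical point of $J_\e$ if and only if $v_\e=\sum_{j=1}^N{\cal P}_\Omega U_{\e,\xi_j}+\phi_{\e,\hbox{\scriptsize$\bbm[\xi]$}}$ solves \eqref{proreg}.

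First I would differentiate $J_\e$ by the chain rule. Writing $V_{\bbm[\xi]}=\sum_{j=1}^N{\cal P}_\Omega U_{\e,\xi_j}$ and $v_\e=V_{\bbm[\xi]}+\phi_{\e,\hbox{\scriptsize$\bbm[\xi]$}}$, the ${\cal C}^1$ dependence of $\phi_{\e,\hbox{\scriptsize$\bbm[\xi]$}}$ from Lemma \ref{reg} gives $\partial_{\xi_j^k}J_\e(\bbm[\xi])=I_\e'(v_\e)\big[\partial_{\xi_j^k}v_\e\big]$. From \eqref{func1} one has $I_\e'(v)[\psi]=\into\nabla v\cdot\nabla\psi\,dx-\e^2\into a e^v\psi\,dx=-\into(\Delta v+\e^2a e^v)\psi\,dx$ for $\psi\in H^1_0(\Omega)$; inserting the intermediate equation \eqref{sati1} and recalling $\partial_{\xi_j^k}v_\e\in H^1_0(\Omega)$, this yields
$$\partial_{\xi_j^k}J_\e(\bbm[\xi])=-\sum_{m=1,\ldots,N\atop l=1,2}c_{m,l}\into\chi_\e(|x-\xi_m|)\,Z_{m,l}\,\partial_{\xi_j^k}v_\e\,dx.$$
This identity renders the two directions of the lemma symmetric: critical points of $J_\e$ are the zeros of the left-hand side for all $(j,k)$, while solutions of \eqref{proreg} correspond (as observed after Lemma \ref{reg}) to $c_{m,l}=0$ for all $(m,l)$.

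Then I would prove that the matrix $A_{(m,l),(j,k)}:=\into\chi_\e(|x-\xi_m|)Z_{m,l}\,\partial_{\xi_j^k}v_\e\,dx$ is invertible for $\e$ small, uniformly in $\bbm[\xi]\in{\cal O}$. The dominant part of $\partial_{\xi_j^k}v_\e$ is $\partial_{\xi_j^k}{\cal P}_\Omega U_{\e,\xi_j}$, which near $\xi_j$ agrees with $Z_{j,k}$ up to the harmonic and $\mu_j$-dependent corrections controlled by \eqref{estione}, while the contribution of $\partial_{\xi_j^k}\phi_{\e,\hbox{\scriptsize$\bbm[\xi]$}}$ is handled by differentiating the orthogonality conditions \eqref{sati11} and invoking the bound \eqref{sati2}. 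Performing the change of variables $y=(x-\xi_j)/\e$, the diagonal blocks $m=j$ converge to the matrix $\big(\int_{\R^2}\chi(|y|)\,\partial_l\omega_{\mu_j}(y)\,\partial_k\omega_{\mu_j}(y)\,dy\big)_{l,k}$, which is a nonzero multiple of the identity by the radial symmetry of $\omega_{\mu_j}$; the off-diagonal blocks $m\neq j$ are of lower order, since $\chi_\e(|x-\xi_m|)$ is supported in an $\e$-neighbourhood of $\xi_m$ whereas $\partial_{\xi_j^k}v_\e$ concentrates near $\xi_j$ and the points keep mutual distance $\geq\delta$ in ${\cal O}$. Hence $A$ is a small perturbation of an invertible block-diagonal matrix, so it is invertible.

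With $A$ invertible the conclusion follows at once. If $\bbm[\xi]$ is a critical point of $J_\e$, then $\partial_{\xi_j^k}J_\e=0$ for every $(j,k)$, so the vector $(c_{m,l})$ lies in the kernel of the transpose of $A$ and vanishes, whence $v_\e$ solves \eqref{proreg}; conversely, if $v_\e$ solves \eqref{proreg} then all $c_{m,l}=0$, so $\partial_{\xi_j^k}J_\e=0$ and $\bbm[\xi]$ is critical. The final formula for $u_\e$ then follows from the substitution recalled above. I expect the main obstacle to be the quantitative estimate of $\partial_{\xi_j^k}v_\e$ — especially controlling $\partial_{\xi_j^k}\phi_{\e,\hbox{\scriptsize$\bbm[\xi]$}}$ through the differentiated constraints \eqref{sati11} — so that the diagonal blocks genuinely dominate and $A$ remains invertible uniformly on ${\cal O}$; this is precisely the delicate point already carried out in \cite{delkomu}.
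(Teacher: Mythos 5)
Your proposal is correct and follows essentially the same route as the source: the paper itself gives no proof of this lemma, stating only that it "has been proved in \cite{delkomu}", and the argument there is precisely the one you reconstruct — differentiate $J_\e$, use the projected equation \eqref{sati1} to express $\nabla J_\e$ as a linear combination of the $c_{m,l}$ with coefficient matrix $\int\chi_\e Z_{m,l}\,\partial_{\xi_j^k}v_\e\,dx$, and show this matrix is a small perturbation of an invertible block-diagonal one (the delicate point being the control of $\partial_{\xi_j^k}\phi_{\e,\hbox{\scriptsize$\bbm[\xi]$}}$ via the differentiated orthogonality conditions, exactly as carried out in \cite{delkomu}). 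Nothing in your write-up deviates from or falls short of that reference's argument.
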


Finally we describe an expansion for $J_\e$ which can be obtained as in
\cite{delkomu}.

\begin{proposition}\label{exp1}
The following asymptotic
expansion holds:
$$
J_\e (\bbm[\xi] )
= -16N\pi +8N\pi\log 8-16 N\pi \log \e +4\pi \Psi(\bbm[\xi])+O(\e)
$$
$\cal C^1$-uniformly with respect to  $\bbm[\xi]\in{\cal O}$. 
Here the function $\Psi$ is defined by 
\beq\label{pssi}\Psi( \bbm[\xi])
=\frac12\sum_{j=1}^NH(\xi_j,\xi_j)+\frac{1}{8\pi}\sum_{j=1}^N \log a(\xi_j)+\frac12\sum_{j,k=1\atop j\neq k}^NG(\xi_j,\xi_k)
.\eeq 
\end{proposition}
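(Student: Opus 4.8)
The plan is to evaluate $I_\e$ on the genuine solution $v_\e=V+\phi_{\e,\hbox{\scriptsize$\bbm[\xi]$}}$, where $V:=\sum_{j=1}^N{\cal P}_\Omega U_{\e,\xi_j}$, and to organise the computation in two stages: first showing that the correction term $\phi_{\e,\hbox{\scriptsize$\bbm[\xi]$}}$ perturbs the energy only at order $O(\e)$, and then expanding $I_\e(V)$ explicitly. For the first stage I would use Taylor's formula, $I_\e(V+\phi)=I_\e(V)+DI_\e(V)[\phi]+\frac12 D^2I_\e(V+t\phi)[\phi,\phi]$ for some $t\in(0,1)$. The first-order term equals $\int_\Omega(-\Delta V-\e^2ae^V)\phi\,dx$, which pairs the error $-\Delta V-\e^2ae^V$ of the approximate solution against $\phi$; since $V$ solves \eqref{proreg} to high accuracy and $\|\phi\|_\infty<C\e|\log\e|$ by \eqref{sati2}, this term and the quadratic remainder are $O(\e)$. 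Because the statement is required $\cal C^1$-uniformly, I would then differentiate these estimates in $\bbm[\xi]$, using that $\bbm[\xi]\mapsto\phi_{\e,\hbox{\scriptsize$\bbm[\xi]$}}$ is $\cal C^1$ with derivative bounds of the same order, so that the differentiated remainders stay $O(\e)$.

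For the second stage I would split $I_\e(V)$ into the Dirichlet part $\frac12\int_\Omega|\nabla V|^2\,dx$ and the nonlinear part $-\e^2\int_\Omega ae^V\,dx$. For the Dirichlet part, integration by parts together with $-\Delta{\cal P}_\Omega U_{\e,\xi_j}=\e^2a(\xi_j)e^{U_{\e,\xi_j}}$ reduces it to a sum of integrals $\frac12\e^2a(\xi_j)\int_\Omega e^{U_{\e,\xi_j}}{\cal P}_\Omega U_{\e,\xi_k}\,dx$; since $\e^2a(\xi_j)e^{U_{\e,\xi_j}}$ carries total mass $8\pi$ and concentrates at $\xi_j$, the diagonal terms are evaluated through the local expansion \eqref{estione} and the off-diagonal ones through \eqref{estitwo}, producing the $\log\e$ contribution together with the self-interactions $H(\xi_j,\xi_j)$, the cross-interactions $G(\xi_j,\xi_k)$, and the factors $\log(8\mu_j^2/a(\xi_j))$. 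For the nonlinear part I would localise near each $\xi_j$, where the choice \eqref{formula} of $\mu_j$ gives $V\approx U_{\e,\xi_j}$, rescale by $y=(x-\xi_j)/\e$, expand $a(x)$ around $\xi_j$, and use the explicit profile $\omega_{\mu_j}$ with $\int_{\R^2}e^{\omega_\mu}\,dy=8\pi$ to extract the remaining constants and interaction terms.

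The decisive point is that the a priori prescription \eqref{formula} of the scaling parameters is exactly what makes the $\log\mu_j$, $\log a(\xi_j)$, $H(\xi_j,\xi_j)$ and $G(\xi_j,\xi_k)$ contributions coming from the two parts collapse into $4\pi\Psi(\bbm[\xi])$ with $\Psi$ as in \eqref{pssi}, while the purely $\bbm[\xi]$-independent bubble integrals assemble the constant $-16N\pi+8N\pi\log 8-16N\pi\log\e$. I expect the main obstacle to be the $\cal C^1$-uniformity rather than the expansion of the value itself: one must differentiate every ingredient in $\bbm[\xi]$ — the projection estimates \eqref{estione}--\eqref{estitwo}, the bound on $\phi_{\e,\hbox{\scriptsize$\bbm[\xi]$}}$, and the localised bubble integrals — and check that each differentiated remainder remains $O(\e)$ uniformly on $\cal O$, which is precisely where the careful bookkeeping of the finite-dimensional reduction of \cite{delkomu} is required.
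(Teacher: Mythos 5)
Your outline is correct and follows essentially the same route as the paper, which does not prove Proposition \ref{exp1} directly but invokes the expansion ``as in \cite{delkomu}'': that reference carries out precisely your two-stage argument, namely a Taylor expansion of $I_\e$ around the projected bubble sum showing the contribution of $\phi_{\e,\hbox{\scriptsize$\bbm[\xi]$}}$ is $O(\e)$, followed by the explicit evaluation of the Dirichlet and nonlinear parts of $I_\e\big(\sum_j{\cal P}_\Omega U_{\e,\xi_j}\big)$ via \eqref{estione}--\eqref{estitwo}, with the choice \eqref{formula} of $\mu_j$ collapsing the $\log\mu_j$, $\log a(\xi_j)$, $H$ and $G$ terms into $4\pi\Psi$. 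Your identification of the ${\cal C}^1$-uniformity as the delicate point, requiring derivative bounds on $\bbm[\xi]\mapsto\phi_{\e,\hbox{\scriptsize$\bbm[\xi]$}}$ and on the coefficients of the reduction, matches where the technical work lies in \cite{delkomu}.
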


Thus in order to construct a solution of problem \eqref{sinh} such as the one predicted in
Theorem \ref{th1} it remains to find a critical point of $J_\e$. This will be accomplished in
the next section.

\section{A max-min argument: proof of Theorem \ref{th1}}
In this section we will employ the reduction approach to construct the solutions stated in
Theorem \ref{th1}. 
The results obtained in the previous section imply that our problem reduces to investigate the existence of  critical points of a functional which is a small ${\cal C}^1$-perturbation of  the function $\Psi$ given by \eqref{pssi}, with $a$ defined by \eqref{aaa}. So $\Psi$ actually becomes 
$$\Psi( \bbm[\xi])
=\frac12\sum_{j=1}^NH(\xi_j,\xi_j)-\sum_{p\in Z}\frac{\alpha_p}{2}\sum_{j=1}^N G(\xi_j,p)+\frac12\sum_{j,k=1\atop j\neq k}^NG(\xi_j,\xi_k).
$$ 
We recall that $\Psi$  is well defined in the set  ${\cal M}$ defined in \eqref{emme}. 
In this section we apply a max-min argument to characterize a
topologically nontrivial  critical value of this function in the set ${\cal M}$. More precisely
we will construct sets $\mathcal D$, $K$, $K_0\subset
{\cal M}$ satisfying  the following properties:
\begin{enumerate}
\item [(P1)] $\mathcal D$ is an open set,   $K$ and $K_0$ are
compact sets,  $K$ is connected and
\begin{equation*}K_0\subset K\subset \mathcal D\subset {\overline {\mathcal D}}\subset {\cal M};\end{equation*}
\item[(P2)] let us set ${\mathcal F}$ to be the class of all continuos maps $\gamma:K\to {\cal D}$ with the property that there exists a continuos homotopy $\Gamma:[0,1]\times K\to {\cal D}$ such that:       $$\Gamma(0,\cdot)=id,\quad \Gamma(1,\cdot)=\gamma,\quad \Gamma(t,\bbm[\xi])=\bbm[\xi]\;\;\forall t\in [0,1],\,\forall \bbm[\xi]\in K_0;$$  we assume
\begin{equation}\label{mima}{ \Psi}^*:=\sup_{\gamma\in{\mathcal F}}\min_{\hbox{\scriptsize$\bbm[\xi]$}\in
K}\Psi(\gamma(\bbm[\xi]))<\min_{\hbox{\scriptsize$\bbm[\xi]$}\in K_0} \Psi(\bbm[\xi]);\end{equation} \item[(P3)] for every $\bbm[\xi]\in\partial\mathcal D$ such that $\Psi(\bbm[\xi])={\Psi}^*$, we
assume that $\partial \mathcal D$ is smooth at $\bbm[\xi]$ and
there exists a vector $\tau_{\hbox{\scriptsize$\bbm[\xi]$}}$ tangent to $\partial\mathcal
D$ at $\bbm[\xi]$ so that $\tau_{\hbox{\scriptsize$\bbm[\xi]$}}\cdot\nabla \Psi(\bbm[\xi])\neq 0$.

\end{enumerate}

\bigskip

Under these assumptions a critical point $\bbm[\xi]\in
{\mathcal D}$ of $\Psi$ with $\Psi(\bbm[\xi])=\Psi^*$
exists, as a standard deformation argument involving the gradient
flow of $\Psi$ shows. Moreover, since properties (P2)-(P3) continue to hold also for a function which is ${\cal C}^1$-close to $\Psi$, then  such critical point will  \textit{survive} small ${\cal C}^1$-perturbations.

 We define 
  \beq\label{didi}{\mathcal D}=\bigg\{\bbm[\xi]\in{\cal M}\;
\bigg|\,\Phi(\bbm[\xi]):=\frac{1}{2}\sum_{j=1}^NH(\xi_j,\xi_j)-\sum_{p\in Z}\frac{\alpha_p}{2}\sum_{j=1}^N G(\xi_j,p)-\frac12\sum_{j,k=1\atop j\neq k}^NG(\xi_j, \xi_k)  >-M \bigg\}\eeq 
where $M>0$ is a sufficiently large number yet to be chosen. By using the properties of the functions $H,\, G$ it is easy to check  that $\Phi$ satisfies \begin{equation}\label{coercivi}\Phi(\bbm[\xi])\to -\infty\hbox{ as }\bbm[\xi]\to \partial{\cal M},\end{equation} and this implies that  ${\mathcal
D}$ is compactly contained in ${\cal M}$.

\medskip

\subsection{Definition of $K$, $K_0$, and proof of (P1)}

In this section we will define the sets $K,\,K_0$ for which
properties (P1)-(P2) hold.

In the following we will  often use the complex numbers to identify the points in $\R^2$ and we will denote by ${\rm i}$ the imaginary unit.

First of all let us  fix  angles $\theta_p$ ($p\in Z$)  and  a number $\delta\in (0,\frac{\pi}{2})$ sufficiently small such that 
the cones  \beq\label{di1}\big\{p+\rho e^{{\rm i}(\theta_p+\theta)}\,\big|\, \rho\geq0, \,\theta \in [-\delta, \delta]\big\},\,\quad p\in Z\eeq are disjoint from one another.  We point out that such choice of angles always exists since $Z$ is finite. 
Possibly decreasing $\delta$, we may also assume \beq\label{di2}\di(p)>2\delta\;\;\forall p\in Z,\qquad |p-q|>4\delta\;\; \forall p,q\in Z,\,p\neq q.\eeq We recall that $\di(x)={\rm{dist}}(x,\partial\Omega)$.
According to assumption \eqref{akka1}, there exist $m$ points $p_1,\ldots, p_m\in Z$ ($m\geq 1$), such that we may split  $N=N_1+N_2+\ldots+ N_m$ with $N_r\in\N$ satisfying \beq\label{crik}1\leq N_r< 1+\alpha_r\quad \forall r.\eeq  Next we set $\{1,\ldots, N\}=I_1\cup \ldots\cup I_m$ where 
$$\begin{aligned}&I_1=\{1,2,\ldots, N_1\}, \\ &I_2=\{N_1+1,N_1+2,\ldots, N_1+N_2\},\\ &\ldots\\ &
I_r=\{N_1+\ldots +N_{r-1}+1,\ldots, N_1+\ldots+N_r\},\\ &\ldots
\\ &I_m=\{N_1+\ldots +N_{m-1}+1,\ldots, N\}.\end{aligned}$$
Next	lemma	establishes	the	unboundedness	of	the function $\Psi$ along suitable  $N$-tuple $\bbm[\xi]=(\xi_1,\ldots, \xi_N)$ whose components lie in the cones \eqref{di1}.

\begin{lemma}\label{prevenar} There exists a constant $C>0$ such that $$\Psi(\bbm[\xi])\leq C$$ for every $\bbm[\xi]=(\xi_1,\ldots, \xi_N)\in{\cal M}$  verifying
\beq\label{satti}\frac{\xi_j-p_r}{|\xi_j-p_r|}=e^{{\rm i}(\theta_{p_r}+j\frac{\delta}{N})}\;\;\;\forall j\in I_r,\;r=1,\ldots, m.\eeq
\end{lemma}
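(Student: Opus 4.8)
The plan is to bound the three groups of terms making up $\Psi$ separately, using that the prescribed directions $e^{{\rm i}(\theta_{p_r}+j\delta/N)}$ force the points attached to a given source onto distinct rays, while the radii $\rho_j:=|\xi_j-p_r|$ ($j\in I_r$) are free but bounded above by $\mathrm{diam}\,\Omega$; the point of the lemma is that the bound is \emph{uniform} over this whole continuum of radii. Writing $\xi_j=p_r+\rho_j e^{{\rm i}(\theta_{p_r}+j\delta/N)}$ for $j\in I_r$, I would first dispose of the Robin term $\frac12\sum_j H(\xi_j,\xi_j)$: the Robin function is bounded above on $\Omega$ (it only tends to $-\infty$ at $\partial\Omega$, see Appendix A), so this contributes at most a constant. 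Next I would discard every source-attraction term $-\frac{\alpha_p}{2}G(\xi_j,p)$ except those linking $\xi_j$ to its own source $p_r$ (with $j\in I_r$): since $G\geq0$ each discarded term is $\leq0$ and only helps an upper bound.

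The second step handles the cross-source interaction terms $G(\xi_j,\xi_k)$ with $j\in I_r$, $k\in I_{r'}$, $r\neq r'$. Each $\xi_j$ lies in the cone \eqref{di1} based at $p_r$, because its angular offset $j\delta/N$ lies in $[\delta/N,\delta]\subset[-\delta,\delta]$. The cones \eqref{di1} are disjoint by construction, so their intersections with the compact set $\overline\Omega$ are disjoint compact sets and hence lie at a positive distance $d_0>0$. Therefore $|\xi_j-\xi_k|\geq d_0$, and $G(\xi_j,\xi_k)=\frac1{2\pi}\log\frac1{|\xi_j-\xi_k|}+H(\xi_j,\xi_k)$ is bounded above by a constant depending only on $d_0$ and $\Omega$.

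The core of the argument is the same-source interaction, balanced against the attraction to $p_r$. For $j,k\in I_r$ with $j\neq k$ the two rays differ in angle by $\phi_{jk}=|j-k|\delta/N\in[\delta/N,\delta)$, so the law of cosines gives $|\xi_j-\xi_k|^2=\rho_j^2+\rho_k^2-2\rho_j\rho_k\cos\phi_{jk}\geq 2\bigl(1-\cos(\delta/N)\bigr)\rho_j\rho_k$, whence $\log\frac1{|\xi_j-\xi_k|}\leq\frac12\log\frac1{\rho_j}+\frac12\log\frac1{\rho_k}+C$. Summing over ordered pairs in $I_r$ yields $\frac12\sum_{j\neq k}G(\xi_j,\xi_k)\leq\frac{N_r-1}{4\pi}\sum_{j\in I_r}\log\frac1{\rho_j}+C$. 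On the other hand, since the regular part $H(\cdot,p_r)$ is bounded below on $\overline\Omega$, one has $G(\xi_j,p_r)\geq\frac1{2\pi}\log\frac1{\rho_j}-C$, so the retained attraction terms obey $-\frac{\alpha_{p_r}}{2}\sum_{j\in I_r}G(\xi_j,p_r)\leq-\frac{\alpha_{p_r}}{4\pi}\sum_{j\in I_r}\log\frac1{\rho_j}+C$. Adding the two estimates, the total source-$r$ contribution is at most $\frac{N_r-1-\alpha_{p_r}}{4\pi}\sum_{j\in I_r}\log\frac1{\rho_j}+C$.

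The conclusion is then forced by \eqref{crik}: the bound $N_r<1+\alpha_{p_r}$ makes the coefficient $N_r-1-\alpha_{p_r}$ strictly negative, while $\rho_j\leq\mathrm{diam}\,\Omega$ keeps $\log\frac1{\rho_j}$ bounded below; hence the source-$r$ contribution is bounded above uniformly in the radii. Summing over $r=1,\dots,m$ and combining with the Robin and cross-source bounds gives $\Psi(\bbm[\xi])\leq C$. I expect the main obstacle to be exactly this final balance: as several of the points rush simultaneously toward $p_r$ the interaction terms blow up logarithmically with total weight $N_r-1$, and it is precisely the hypothesis $N_r<1+\alpha_{p_r}$ that allows the attraction $-\frac{\alpha_{p_r}}{2}G(\cdot,p_r)$, of logarithmic weight $\alpha_{p_r}$, to dominate it; some care is also needed to check that the constants coming from $H$ are genuinely uniform over the full, unbounded range of admissible radii.
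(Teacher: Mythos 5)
Your proposal is correct and follows essentially the same route as the paper: bound the Robin and cross-cone terms by constants, reduce everything to logarithmic terms in the radii $\rho_j=|\xi_j-p_r|$, and exploit the balance $\alpha_{p_r}>N_r-1$ from \eqref{crik} to control the same-cone repulsion by the attraction to $p_r$. The only (immaterial) difference is the geometric separation estimate: you use the law of cosines to get $|\xi_j-\xi_k|^2\geq 2\bigl(1-\cos(\delta/N)\bigr)\rho_j\rho_k$, while the paper uses $|\xi_j-\xi_k|\geq|\xi_j-p_r|\sin\frac{\delta}{2N}$; both yield the same bound $(N_r-1)\sum_{j\in I_r}\log\frac{1}{\rho_j}+C$ on the interaction sum.
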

\begin{proof}
For any $\bbm[\xi] \in{\cal M}$ satisfying  \eqref{satti} 
 we get 
$$|\xi_j-\xi_k|\geq |\xi_j-p_r|\sin \frac{\delta}{2N}\quad \forall j,k\in I_r,\;j\neq k\;\;\;(r=1,\ldots, m).$$
By construction, for any $j\in I_r$  we have that $\xi_j$ belongs to the cone \eqref{di1} with $p=p_r$. This  implies that $$| \xi_j-\xi_k|\geq \mu\quad \forall j\in I_r,\, k\in I_s,\;r\neq s$$ where the value  $\mu$ depends only  on the choice of the angles $\theta_p$ and the number $\delta$.  Combining these facts with the properties of the functions $G$ and $H$
(see Appendix A),  we may  estimate
\beq\label{stimm}\begin{aligned}\Psi(\bbm[\xi])&
\leq- \sum_{p\in Z}\frac{\alpha_{p}}{4\pi} \sum_{j=1}^N\log\frac{1}{|\xi_j-p|}+\frac{1}{4\pi}\sum_{j,k=1\atop j\neq k}^N\log\frac{1}{|\xi_j-\xi_k|}+C\\ &\leq -
\frac{1}{4\pi}\sum_{r=1}^m\alpha_{p_r} \sum_{j=1}^N\log\frac{1}{|\xi_j-p_r|}+\frac{1}{4\pi}\sum_{r=1}^m\sum_{j,k\in I_r\atop j\neq k}\log\frac{1}{|\xi_j-\xi_k|} +C. 
\end{aligned}
\eeq
For a fixed $r\in\{1,\ldots, m\}$ and $j\in I_r$ we have
$$\begin{aligned}&-\alpha_{p_r}\log\frac{1}{|\xi_j-p_r|}+\sum_{k\in I_r\atop k\neq j}\log\frac{1}{|\xi_j-\xi_k|}\\ &\leq -\alpha_{p_r}\log\frac{1}{|\xi_j-p_r|}+(N_r-1)\log\frac{1}{|\xi_j-p_r|}-(N_r-1)\log \sin\frac{\delta}{2N}. 
\end{aligned}$$
Since $\alpha_{p_r}> N_r-1$ by \eqref{crik}, the above quantity is uniformly bounded above. Then the thesis follows by  \eqref{stimm}.

\end{proof}
Now we define $N$-tuple
$$\bbm[\xi]_0=(\xi_1^0,\ldots, \xi_N^0)$$  by
\beq\label{defxi0}\xi_j^0=p_r+\frac32\delta e^{{\rm i}(\theta_{p_r}+j\frac{\delta}{N})} \;\;\;\forall j\in I_r,\; r=1,\ldots, m.\eeq
In order to define $K$, we have to consider the $N$-tuple  $\bbm[\xi]=(\xi_1,\ldots, \xi_N)$ with the property that $N_r$ components  lie on the annulus  with radii $\delta$ and $2\delta$ centered in $p_r$. More precisely, setting
$$U_r:=\{\xi\in \R^2\,|\, \delta<|\xi-p_r|<2\delta\},$$ we introduce  the open set 
\beq\label{openset}\left\{\bbm[\xi]\in U_1^{N_1}\times \ldots \times U_m^{N_m} \,\Big|\,|\xi_j-\xi_k|>M^{-1}   \quad \forall j\neq k\right\}.\eeq
In principle, we do not know
whether \eqref{openset} is connected or not, so we will choose 
 a convenient connected component $W$. The choice of $\delta $ in \eqref{di1}-\eqref{di2} implies that $\xi_j^0\neq \xi_k^0$ for $j\neq k$, then $\bbm[\xi]_0$ belongs to \eqref{openset} provided that $M$ is sufficiently large. Now we are in conditions of defining $K$ and $K_0$: 
$$W:=\hbox{ the connected component of \eqref{openset} containing } \bbm[\xi]_0,$$
$$K:=\overline W,\quad K_0=\Big\{\bbm[\xi]\in K\,\Big|\, \min_{j\neq k}|\xi_j-\xi_k|=M^{-1}\Big\}.$$
$K$ is clearly connected and $K_0\subset\partial W\subset K$. Moreover by construction,  since  the closed annulii $\overline{U}_r$ are contained in $\Omega\setminus Z$ and disjoint from one another  according to  \eqref{di2}, using the properties of the $G$ and $H$ (see Appendix A) we get that   the functions $\sum_{j\neq k}H(\xi_j, \xi_k)$,  $\sum_{j}H(\xi_j, \xi_j)$ and  $\sum_{p\in Z}\alpha_p\sum_{j} G(\xi_j, p)$ are uniformly bounded in the set $U_1^{N_1}\times\ldots\times U_m^{N_m}$, therefore 
\beq\label{iuni}\sum_{j,k=1\atop j\neq k}^NH(\xi_j, \xi_k),\;\;\sum_{j=1}^NH(\xi_j, \xi_j),\;\; \sum_{p\in Z}\alpha_p\sum_{j=1}^N G(\xi_j, p)=O(1)\;\hbox{  in }K\eeq with the above quantity $O(1)$ uniformly bounded independently of $M$. 
On the other hand in the set $K$ we have $G(\xi_j,\xi_k)\leq \frac{1}{2\pi}\log M+C$ for $j\neq k$ by \eqref{matis}. Consequently for large $M$ we also have $K\subset {\cal D}$.

\subsection{Proof of (P2)}

The definition of the max-min value $\Psi^*$ in \eqref{mima} depends on the particular $M>0$ chosen in \eqref{didi}. To emphasize this fact we denote this max-min value by $\Psi^*_M$. In this section we will prove that (P2) holds for $M$ sufficiently large. To this aim we need the estimate for $\Psi^*_M$ provided by the following proposition.
\begin{proposition}\label{loweresti}
The quantity $\Psi^*_M$ is bounded independently  of the large number $M$ used to define ${\cal D}$, namely there exist two constants $c, C$ independent of $M$ such that 
\begin{equation}\label{rox}c\leq\Psi^*_M
\leq C.\end{equation}
\end{proposition}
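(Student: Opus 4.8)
The plan is to prove the two bounds in \eqref{rox} separately: the lower bound is elementary, while the upper bound rests on a degree argument that forces every admissible deformation of $K$ to hit the angular configuration controlled by Lemma \ref{prevenar}. For the lower bound, I would note that the identity belongs to ${\cal F}$ (take $\Gamma(t,\cdot)\equiv\mathrm{id}$, which is admissible since $K\subset{\cal D}$ and the homotopy fixes $K_0$), so that $\Psi^*_M\geq\min_{\hbox{\scriptsize$\bbm[\xi]$}\in K}\Psi(\bbm[\xi])$. On $K$ the quantities $\sum_{j}H(\xi_j,\xi_j)$, $\sum_{p\in Z}\alpha_p\sum_j G(\xi_j,p)$ and $\sum_{j\neq k}H(\xi_j,\xi_k)$ are $O(1)$ uniformly in $M$ by \eqref{iuni}, while the remaining contribution $\frac{1}{4\pi}\sum_{j\neq k}\log\frac{1}{|\xi_j-\xi_k|}$ is bounded below because $|\xi_j-\xi_k|\leq\mathrm{diam}\,\Omega$. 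Hence $\Psi\geq -C$ on $K$ with $C$ independent of $M$, giving $\Psi^*_M\geq c$.

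For the upper bound it suffices, by Lemma \ref{prevenar}, to show that for every $\gamma\in{\cal F}$ there is a point $\bbm[\xi]\in K$ such that $\gamma(\bbm[\xi])$ satisfies \eqref{satti}; then $\min_{\hbox{\scriptsize$\bbm[\xi]$}\in K}\Psi(\gamma(\bbm[\xi]))\leq C$ with the same $M$-independent constant of that lemma, and taking the supremum over ${\cal F}$ yields $\Psi^*_M\leq C$. To detect \eqref{satti} I would introduce the continuous angular map $T\colon{\cal M}\to (S^1)^N$, $T(\bbm[\xi])=\big((\xi_j-p_{r(j)})/|\xi_j-p_{r(j)}|\big)_{j=1}^N$, where $r(j)$ is the index with $j\in I_{r(j)}$; this is well defined on ${\cal D}\subset{\cal M}$ since $\xi_j\neq p_{r(j)}$. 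Writing $\bbm[\omega]_0=\big(e^{{\rm i}(\theta_{p_{r(j)}}+j\frac{\delta}{N})}\big)_{j=1}^N$, condition \eqref{satti} is exactly $T(\bbm[\xi])=\bbm[\omega]_0$, so the task becomes showing that $T\circ\gamma$ attains the value $\bbm[\omega]_0$.

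The heart of the argument is then a degree computation. I would exhibit an embedded $N$-torus $\Sigma\subset W$ on which $T$ restricts to the identity of $(S^1)^N$. Fix distinct radii $\rho_j\in(\delta,\tfrac{3}{2}\delta)$, pairwise separated by more than $M^{-1}$ within each annulus, and set $\Sigma=\big\{(p_{r(j)}+\rho_j e^{{\rm i}s_j})_{j}\,:\,(s_j)\in(S^1)^N\big\}$. The constraint $\rho_j<\tfrac{3}{2}\delta$ together with \eqref{di2} keeps every mutual distance above $M^{-1}$ for all choices of the angles (inside an annulus by the radial separation, across annuli because $|p_r-p_q|>4\delta$ forces $|\xi_j-\xi_k|>\delta$), so $\Sigma$ lies in the open set \eqref{openset}; being connected and joinable to $\bbm[\xi]_0$ inside that set, $\Sigma\subset W$. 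On $\Sigma$ one has $T(\bbm[\xi])=(s_j)_j$, so $T\big|_\Sigma$ has degree one. Since $\gamma=\Gamma(1,\cdot)$ is homotopic to $\mathrm{id}$ through $\Gamma(t,\cdot)\colon K\to{\cal D}$, the maps $T\circ\gamma\big|_\Sigma$ and $T\big|_\Sigma$ are freely homotopic through $t\mapsto T\circ\Gamma(t,\cdot)\big|_\Sigma$, hence share degree one; a degree-one self-map of $(S^1)^N$ is surjective, so $T\circ\gamma$ attains $\bbm[\omega]_0$ at some $\bbm[\xi]\in\Sigma\subset K$, which is the point we sought.

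The main obstacle is precisely this degree step, and within it the construction of $\Sigma$: the radii must be chosen so that the full independent winding of all $N$ angular variables never produces a collision, which is why I confine them to $(\delta,\tfrac{3}{2}\delta)$ and exploit \eqref{di2}, thereby guaranteeing that $\Sigma$ stays in the single component $W$ and never leaves ${\cal M}$. I would finally emphasize that both conclusions are uniform in $M$: the constant produced by Lemma \ref{prevenar} and the $O(1)$ bounds of \eqref{iuni} do not depend on $M$, so the resulting $c$ and $C$ are $M$-independent, as \eqref{rox} requires.
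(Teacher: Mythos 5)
Your proof is correct, and its overall architecture matches the paper's: the lower bound comes from testing \eqref{mima} with the identity map and invoking the $M$-independent bounds \eqref{iuni}, and the upper bound from showing that every $\gamma\in{\cal F}$ must send some point of $K$ to a configuration of the form \eqref{satti}, after which Lemma \ref{prevenar} gives a constant independent of $\gamma$ and $M$. The genuine difference lies in how you force \eqref{satti}. The paper never introduces the angular projection $T$ or an embedded torus; from $\Gamma$ it builds the radially normalized homotopy ${\cal H}_j(t,\bbm[\xi])=p_r+|\xi_j-p_r|\,\frac{\Gamma_j(t,\bbm[\xi])-p_r}{|\Gamma_j(t,\bbm[\xi])-p_r|}$, proves the boundary-preservation property ${\cal H}(t,\partial W)\subset\partial W$ in \eqref{pugli} --- and this is exactly where the pinning condition $\Gamma(t,\cdot)=id$ on $K_0$ and the annular structure of $W$ enter --- and then applies Brouwer degree on the open set $W\subset\R^{2N}$ to solve ${\cal H}(1,\bbm[\xi]_\gamma)=\bbm[\xi]_0$, which is equivalent to $\gamma(\bbm[\xi]_\gamma)$ satisfying \eqref{satti}. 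You instead restrict $T$ to an embedded $N$-torus $\Sigma\subset W$ and use homotopy invariance of the mapping degree between closed oriented $N$-tori plus surjectivity of maps of nonzero degree. Each route buys something: yours dispenses with \eqref{pugli} entirely and never uses the $K_0$-pinning of admissible homotopies (only that $\Gamma$ stays in ${\cal M}$, so that $T$ is defined along the deformation), so it actually establishes the upper bound for a strictly larger class of deformations; the paper's version avoids constructing $\Sigma$ and checking $\Sigma\subset W$, a verification you assert but do not carry out in full --- it does go through, by joining $\bbm[\xi]_0$ radially inside \eqref{openset} to the point of $\Sigma$ with angles $\theta_{p_r}+j\frac{\delta}{N}$, since along that path intra-annulus distances stay above $2\delta\sin\frac{\delta}{2N}>M^{-1}$ for $M$ large, and inter-annulus distances stay above $\delta$ by \eqref{di2}. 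The minor discrepancy in the lower bound is immaterial: the paper simply drops the interaction terms using $G>0$, while you split $G$ into its logarithmic and regular parts and bound $\log\frac{1}{|\xi_j-\xi_k|}$ below via $|\xi_j-\xi_k|\le\mathrm{diam}\,\Omega$; both give a constant independent of $M$.
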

\begin{proof}
Let $\gamma\in {\mathcal F},$ namely $\gamma:K\to {\cal D}$ is a continuous map such that  there exists a continuos homotopy $\Gamma:[0,1]\times K\to {\cal D}$ satisfying:       $$\Gamma(0,\cdot)=id,\quad \Gamma(1,\cdot)=\gamma,\quad \Gamma(t,\bbm[\xi])=\bbm[\xi]\;\;\forall t\in [0,1],\,\forall \bbm[\xi]\in K_0.$$ 
Let $\gamma=(\gamma_1,\ldots, \gamma_N) $ and  $\Gamma=(\Gamma_1,\ldots, \Gamma_N)$ with $\gamma_j:K\to \R^2$ and $\Gamma_j: [0,1]\times K\to \R^{2}$. We now define a new homotopy
${\cal H}:[0,1]\times K\to \R^{2N}$ with components ${\cal H}_j: [0,1]\times K\to \R^{2}$:
\beq\label{citi}{\cal H}_j(t,\bbm[\xi])=p_r+|\xi_j-p_r|\cdot \frac{\Gamma_j(t, \bbm[\xi])-p_r}{|\Gamma_j(t, \bbm[\xi])-p_r|}\;\;\;\forall j\in I_r,\;r=1,\ldots, m.\eeq
Clearly ${\cal H}$ is a continuous map. 
By $\Gamma(0,\cdot)=id$ we immediately get 
$${\cal H}(0,\cdot)=id.$$
We claim that \beq\label{pugli}{\cal H}(t, \partial W)\subset \partial W\quad \forall t\in [0,1].\eeq
Indeed, if $\bbm[\xi]\in K_0$, then $\Gamma_j(t, \bbm[\xi])=\xi_j$ and consequently, by definition, ${\cal H}_j(t,\bbm[\xi])=\xi_j$, which implies 
${\cal H}(t, \bbm[\xi])=\bbm[\xi]\in K_0\subset \partial W$.
On the other hand, if $\bbm[\xi]\in \partial W\setminus K_0$, we get $\bbm[\xi]\in \partial (U_1^{N_1}\times \ldots \times U_m^{N_m} )$. Then there exists $j$ such that $\xi_j\in \partial U_j$. Assume, for instance,  $\xi_1\in \partial U_1$, namely either $|\xi_1-p_1|=\delta$ or $|\xi_1-p_1|=2\delta$. By definition we have $|{\cal H}_1(t,\bbm[\xi])-p_1|=|\xi_1-p_1|\in \{\delta, 2\delta\}$, which implies  that ${\cal H}_1(t,\bbm[\xi])\in\partial U_1$ and then 
${\cal H}(t,\bbm[\xi])\in \partial W$. Hence \eqref{pugli} follows. 

We have thus proved that  the homotopy ${\cal H}$ applies $\partial W$ into itself. The theory of the topological degree gives that if $\bbm[\xi]\in W$ then $\deg({\cal H}(1, \cdot), W, \bbm[\xi])=\deg({\cal H}(0, \cdot), W, \bbm[\xi])=\deg (id, W, \bbm[\xi])=1$. 
Then there exists $\bbm[\xi]_\gamma\in W$ such that ${\cal H}(1, \bbm[\xi]_\gamma)=\bbm[\xi]_0$, namely, by \eqref{citi}, setting $\bbm[\xi]_\gamma=(\xi_1^\gamma,\ldots, \xi_N^\gamma)$, 
$$\gamma_j( \bbm[\xi]_\gamma)-p_r=\frac{|\gamma_j( \bbm[\xi]_\gamma)-p_r|}{|\xi_j^\gamma-p_r|}(\xi_j^0-p_r)\;\;\;\forall j\in I_r,\;r=1,\ldots, m.$$
Using \eqref{defxi0} we get
$$\frac{\gamma_j( \bbm[\xi]_\gamma)-p_r}{|\gamma_j( \bbm[\xi]_\gamma)-p_r|}=e^{{\rm i}(\theta_{p_r}+j\frac{\delta}{N})}\;\;\;\forall j\in I_r,\;r=1,\ldots, m.$$
In such a situation we deduce that  $\gamma(\bbm[\xi])$ has the form \eqref{satti}. So Lemma \ref{prevenar} applies and gives 
$$\min_{\hbox{\scriptsize$\bbm[\xi]$}\in K}\Psi(\gamma(\xi))\leq \Psi(\gamma(\bbm[\xi]_\gamma))\leq C$$
and  the constant on the right hand side depends only  on the choice of the angles $\theta_p$ and the number  $\delta$ in \eqref{di1}-\eqref{di2}; in particular, $C$ is independent  of $\gamma$ and $M$. 
 Hence,  by taking the supremum for all the maps $\gamma\in{\cal F}$,   we conclude that the max-min value $\Psi^*$ is bounded above  independently of $M$, as desired.

Finally, in order to prove the lower boundedness, by taking $\eta=id$ in the definition \eqref{mima} we get
$$\Psi^*_M\geq \min_{\hbox{\scriptsize$\bbm[\xi]$}\in K}\Psi(\bbm[\xi])\geq \min_{\hbox{\scriptsize$\bbm[\xi]$}\in K}\bigg(\frac{1}{2}\sum_{j=1}^N H(\xi_j,\xi_j)-\sum_{p\in Z}\frac{\alpha_p}{2}\sum_{j=1}^N G(\xi_j, p)\bigg) .$$ As we have already observed in \eqref{iuni},   the function in the bracket is uniformly bounded 
in
the set $K$
independently of $M$.
\end{proof}

Then the max-min inequality (P2) will follow once we have proved the next result. \begin{proposition}\label{butta} The following holds:\begin{equation}\label{rocco}\min_{\hbox{\scriptsize$\bbm[\xi]$}\in K_0}\Psi(\bbm[\xi])=\min\left\{\Psi(\bbm[\xi])\,\Big|\,\bbm[\xi]\in K,\, \min_{j\neq k}|\xi_j-\xi_k|=M^{-1}\right\} \to +\infty \hbox{ as }M\to +\infty.\end{equation}\end{proposition}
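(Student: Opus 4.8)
The plan is to isolate the single genuinely divergent contribution to $\Psi$ on $K_0$ — the mutual interaction of the closest pair of points — and to control everything else by a constant independent of $M$. First I would record that $K_0$ is a compact subset of ${\cal M}$: all its components lie in the fixed annular region $\overline U_1\cup\ldots\cup\overline U_m\subset\Omega\setminus Z$, hence away from $\partial\Omega$ and from $Z$, and they are pairwise distinct since $\min_{j\neq k}|\xi_j-\xi_k|=M^{-1}>0$. Thus $\Psi$ is continuous on $K_0$ and the minimum in \eqref{rocco} is attained, so it suffices to exhibit a lower bound for $\Psi$ on $K_0$ that tends to $+\infty$ as $M\to+\infty$.

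Next I would split $\Psi$ by means of the decomposition $G(x,y)=\frac{1}{2\pi}\log\frac{1}{|x-y|}+H(x,y)$. Since every component of a configuration in $K$ lies in the fixed compact set $\overline U_1\cup\ldots\cup\overline U_m$, the estimates \eqref{iuni} guarantee that the Robin terms $\sum_jH(\xi_j,\xi_j)$, the cross terms $\sum_{j\neq k}H(\xi_j,\xi_k)$ and the source terms $\sum_{p\in Z}\alpha_p\sum_jG(\xi_j,p)$ are all $O(1)$ \emph{uniformly in $M$}. Collecting these, the whole of $\Psi$ except for the singular interaction part reduces to a constant independent of $M$, so that
$$\Psi(\bbm[\xi])=\frac{1}{4\pi}\sum_{j,k=1\atop j\neq k}^N\log\frac{1}{|\xi_j-\xi_k|}+O(1)\qquad\hbox{on }K,$$
with the $O(1)$ independent of $M$.

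It then remains to bound the logarithmic sum from below on $K_0$. Here every pair satisfies $M^{-1}\leq|\xi_j-\xi_k|\leq D$, where $D$ is the (finite) diameter of $\overline U_1\cup\ldots\cup\overline U_m$, a quantity independent of $M$; hence each term obeys $\log\frac{1}{|\xi_j-\xi_k|}\geq -\log D$. Moreover on $K_0$ there is a pair $(j_0,k_0)$ with $|\xi_{j_0}-\xi_{k_0}|=M^{-1}$, which contributes $\log M$ to the ordered sum twice. Retaining these two terms and bounding the remaining $N(N-1)-2$ terms below by $-\log D$ gives
$$\sum_{j,k=1\atop j\neq k}^N\log\frac{1}{|\xi_j-\xi_k|}\ \geq\ 2\log M-C,$$
with $C$ depending only on $N$ and $D$. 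Combined with the previous display this yields
$$\Psi(\bbm[\xi])\ \geq\ \frac{1}{2\pi}\log M-C'\qquad\forall\,\bbm[\xi]\in K_0,$$
with $C'$ independent of $M$; taking the minimum over $K_0$ proves \eqref{rocco}. (Alternatively, since $G>0$ in $\Omega$, one may simply discard all interaction terms but the closest pair and estimate $G(\xi_{j_0},\xi_{k_0})\geq\frac{1}{2\pi}\log M-C$ directly from the splitting and the boundedness of $H$ on $\overline U_1\cup\ldots\cup\overline U_m$.)

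There is no deep obstacle in this statement; the only point requiring care is the bookkeeping that every contribution other than the closest-pair interaction is controlled \emph{uniformly in $M$}. This is exactly what \eqref{iuni} provides for the Robin and source terms, while the positivity of $G$ (or, equivalently, the crude two-sided bound $M^{-1}\leq|\xi_j-\xi_k|\leq D$) is what lets me dispose of the potentially numerous remaining interaction terms without tracking their combined sign, reducing the whole estimate to the single blowing-up term $\frac{1}{2\pi}\log M$ produced by the pair at distance $M^{-1}$.
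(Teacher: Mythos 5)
Your proof is correct and follows essentially the same route as the paper: both rest on the decomposition $G(\xi_j,\xi_k)=\frac{1}{2\pi}\log\frac{1}{|\xi_j-\xi_k|}+H(\xi_j,\xi_k)$ together with the uniform bounds \eqref{iuni} to reduce $\Psi$ on $K$ to $\frac{1}{4\pi}\sum_{j\neq k}\log\frac{1}{|\xi_j-\xi_k|}+O(1)$, and then isolate the closest pair while bounding the remaining logarithmic terms below by a constant coming from the boundedness of the annular region. The only difference is presentational: the paper argues sequentially (configurations with $\min_{j\neq k}|\xi_j^n-\xi_k^n|\to 0$ force $\Psi\to+\infty$), whereas you give the equivalent direct quantitative bound $\Psi\geq\frac{1}{2\pi}\log M-C'$ on $K_0$.
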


\begin{proof}
Let $\bbm[\xi]_n=(\xi_1^n,\ldots,\xi_N^n)\in K$ be such that $\min_{j\neq k}|\xi_j^n-\xi_k^n| \to 0$ as $n\to +\infty$. Possibly passing to a subsequence, we may assume 
\begin{equation}\label{cover}|\xi_{j_0}^n-\xi_{k_0}^n| \to 0\hbox{ as }n\to +\infty\end{equation} for some  $j_0\neq k_0$. So, by using \eqref{iuni},  we may estimate
$$\Psi(\bbm[\xi]_n)=\frac{1}{4\pi}\sum_{j,k=1\atop j\neq k}^N\log\frac{1}{|\xi_{j}^n-\xi_{k}^n|}+O(1) \geq \frac{1}{2\pi}\log\frac{1}{|\xi_{j_0}^n-\xi_{k_0}^n|}+O(1)\to +\infty.$$
\end{proof}

\subsection{Proof of (P3)}
We shall show that  (P3) holds provided that $M$ is sufficiently large. 
A key role for the validity of the compactness property (P3) is played by the following lemma. 
\begin{lemma}\label{assoassu} Let $\bbm[\xi]=(\xi_1^n,\ldots, \xi_N^n)\in {\cal M}$ be such that \beq\label{boupsiq}\Psi(\bbm[\xi]_n)=O(1),\quad \min_{j\neq k}|\xi_j^n-\xi_k^n|\to 0.\eeq 
Then there exist $j_0\neq k_0$ such that, possibly passing to a subsequence,  
the following holds: $$|\xi_{j_0}^n-\xi_{k_0}^n|=o(\di(\xi_{j_0}^n)).$$
\end{lemma}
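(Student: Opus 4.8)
The plan is to argue by contradiction, showing that if no pair ever collides at a scale smaller than its distance to the boundary, then the boundedness of $\Psi$ must fail. Suppose the conclusion is false, i.e. no pair admits a subsequence along which $|\xi_{j}^n-\xi_{k}^n|=o(\di(\xi_{j}^n))$. For a fixed pair this is equivalent to $\liminf_n |\xi_j^n-\xi_k^n|/\di(\xi_j^n)>0$, and since there are only finitely many pairs there exist $c>0$ and $n_0$ with
$$|\xi_j^n-\xi_k^n|\geq c\,\di(\xi_j^n)\qquad \forall j\neq k,\ \forall n\geq n_0.$$
By the symmetry of the left-hand side this also gives $|\xi_j^n-\xi_k^n|\geq c\,\di(\xi_k^n)$, hence $|\xi_j^n-\xi_k^n|\geq c\max\{\di(\xi_j^n),\di(\xi_k^n)\}$ for all $j\neq k$.

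The key step is to observe that, under such a condition, no pair can carry an unbounded amount of interaction energy. Indeed, the estimates on the Green's function collected in Appendix A give a boundary-sensitive bound of the type $G(x,y)\leq \frac{1}{4\pi}\log\big(1+C\,\di(x)\di(y)/|x-y|^2\big)+C$ (the heuristic being the explicit half-space formula obtained by reflecting $y$ across the boundary). Since $|\xi_j^n-\xi_k^n|\geq c\,(\di(\xi_j^n)\di(\xi_k^n))^{1/2}$, the quotient $\di(\xi_j^n)\di(\xi_k^n)/|\xi_j^n-\xi_k^n|^2$ stays bounded, whence
$$G(\xi_j^n,\xi_k^n)\leq C\qquad \forall j\neq k,\ \forall n\geq n_0,$$
with $C$ independent of $n$. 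In other words, a collision raises the energy to $+\infty$ only when the two points are much closer to each other than to the boundary, which is precisely the excluded regime $|\xi_j^n-\xi_k^n|=o(\di(\xi_j^n))$.

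To close the argument I would then exploit the hypothesis $\min_{j\neq k}|\xi_j^n-\xi_k^n|\to0$. After passing to a subsequence the minimum is attained by a fixed pair, say $(1,2)$, so $|\xi_1^n-\xi_2^n|\to0$; combined with $|\xi_1^n-\xi_2^n|\geq c\,\di(\xi_1^n)$ this forces $\di(\xi_1^n)\to0$, i.e. $\xi_1^n\to\partial\Omega$. The Robin function then diverges, $H(\xi_1^n,\xi_1^n)\to-\infty$, while by Appendix A $H(\xi_j^n,\xi_j^n)\leq C$ for the remaining indices and $G(\xi_j^n,p)\geq0$ for all $j$ and all $p\in Z$. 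Dropping the nonpositive singular terms and using the bound just obtained on the interaction terms,
$$\Psi(\bbm[\xi]_n)\leq \frac12\sum_{j=1}^N H(\xi_j^n,\xi_j^n)+\frac12\sum_{j\neq k}G(\xi_j^n,\xi_k^n)\leq \frac12 H(\xi_1^n,\xi_1^n)+C\longrightarrow-\infty,$$
which contradicts $\Psi(\bbm[\xi]_n)=O(1)$ and establishes the existence of the desired pair $(j_0,k_0)$.

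I expect the main obstacle to be the second step: one must have at hand the correct boundary-sensitive upper bound for $G$, and \emph{not} merely the cruder estimate $G(x,y)\le \frac{1}{2\pi}\log\frac{1}{|x-y|}+C$, which is too weak here because it does not decay when $|x-y|$ is comparable to $\di(x)$. It is exactly this sharper estimate that distinguishes a genuine interior collision from two points that merely approach the boundary together; the remainder of the proof is bookkeeping with the sign of the singular terms and the divergence of the Robin function.
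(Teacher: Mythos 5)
Your proof is correct and follows essentially the same route as the paper: negate the conclusion to get a uniform bound $|\xi_j^n-\xi_k^n|\geq c\,\di(\xi_j^n)$ for all $j\neq k$, deduce that every interaction term $G(\xi_j^n,\xi_k^n)$ stays bounded, observe that the collapsing pair then forces $\di(\xi_j^n)\to 0$ for some $j$, and conclude $\Psi(\bbm[\xi]_n)\to-\infty$ from \eqref{matis}--\eqref{coercive} after discarding the nonpositive singular terms. The boundary-sensitive Green's function bound you flag as the main obstacle is exactly Corollary \ref{robin1} in the appendix (proved there from Lemma \ref{robin} via reflection), so the sharper product estimate involving $\di(x)\di(y)/|x-y|^2$ is not actually needed.
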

\begin{proof}
Suppose by contradiction that, up to a subsequence,  $|\xi_j^n-\xi_k^n|\geq c\di(\xi_j^n)$ for all $j\neq k$. Then, according to Corollary \ref{robin1} we have 
$G(\xi_j^n, \xi_k^n)=O(1)$ for all $j\neq k$.
The hypothesis \eqref{boupsiq} gives 
$$\min_{j=1,\ldots,N}\di(\xi_j^n)\to 0.$$
  So we may evaluate
$$\Psi(\bbm[\xi]_n)=\frac12\sum_{j=1}^NH(\xi_j^n,\xi_j^n)-\frac12\sum_{p\in Z}\alpha_p\sum_{j=1}^NG(\xi_j^n,p) +O(1)\leq \frac12\sum_{j=1}^NH(\xi_j^n,\xi_j^n)+O(1)\to-\infty $$ by \eqref{matis}-\eqref{coercive}, 
in contradiction with \eqref{boupsiq}.
\end{proof}
By Proposition \ref{loweresti}  we get $\Psi^*=\Psi_M^*=O(1)$ as $M\to +\infty$. Then  (P3) will follow once we have proved the assertion of tangential derivative being non-zero over the boundary of ${\cal D}$ for uniformly bounded values of $\Psi$ provided that $M$ is large enough. We proceed by contradiction: assume that there exist $\bbm[\xi]_n=(\xi_1^n, \ldots, \xi_N^n)\in {\cal M}$ and a vector $(\beta^n_1, \beta_2^n)\neq (0,0)$ such that 
$$\min_{j\neq k}|\xi_j^n-\xi_k^n|=o(1),$$ 
\beq\label{boupsi}\Psi(\bbm[\xi]_n)=O(1),\eeq
\beq\label{exp}\beta_1^n\nabla \Psi(\bbm[\xi]_n)+\beta_2^n\nabla \Phi(\bbm[\xi]_n)=0.\eeq
Last expression can be read as $\nabla \Psi(\bbm[\xi]_n)$ and $\nabla \Phi(\bbm[\xi]_n)$ are linearly dependent. Observe that, according to the Lagrange multiplier Theorem,  this contradicts either the smoothness of $\partial {\cal D}$ or the nondegeneracy of $\nabla \Phi(\bbm[\xi]_n)$ on the tangent space at the level $\Psi^*$.
Without loss of generality we may assume \beq\label{ops}(\beta_1^n)^2+(\beta_2^n)^2=1\;\hbox{ and } \;\beta_1^n+\beta_2^n\geq 0.\eeq
The identities $\beta_1^n\nabla_{\xi_j}\Psi(\bbm[\xi]_n)+\beta_2^n\nabla_{\xi_j}\Phi(\bbm[\xi]_n)=0$ imply
\beq\label{leone}(\beta_1^n+\beta_2^n)\nabla_x H(\xi_j^n, \xi_j^n)-\frac{\beta_1^n+\beta_2^n}{2}\sum_{p\in Z}\alpha_p\nabla_x G(\xi_j^n,p)+(\beta_1^n-\beta_2^n)\sum_{k=1\atop k\neq j}^N\nabla_x G(\xi_j^n,\xi_k^n)=0\quad \forall j.\eeq
The object of the remaining part of the section is to expand the left hand side of \eqref{exp} and to prove that the leading term is not zero, so that the contradiction arises. 
To this aim we distinguish five cases which will all lead to a contradiction. 

In what follows at many steps of the arguments we will pass to a subsequence, without further notice.

\bigskip

\noindent {\bf Case 1.} \textit{$\beta_1^n+\beta_2^n\geq c>0$, $\beta_1^n-\beta_2^n\geq o(1)$ and there exists $\hat{\jmath}\in \{1,\ldots,N\}$ such that $\di(\xi_{\hat\jmath}^n)\to 0$.}

\bigskip
Assume that $\xi_{\hat\jmath}^n\to\xi_{\hat\jmath}\in\partial\Omega$. We can split $\{1,\ldots,N\}=I\cup J$ where 
$$ I=\{j\,|\, \xi_j^n\to \xi_{\hat\jmath}\},\qquad J=\{j\,|\, |\xi_j^n-\xi_{\hat\jmath}|\geq c\}.$$
By using the notation of Lemma \ref{robin}, we multiply the identity \eqref{leone} by $\nu_{\xi_j^n}$ and, adding in $j\in I $,  we obtain 
\beq\label{fiu}(\beta_1^n+\beta_2^n)\sum_{j\in I}\frac{\xi_j^n-\bar{\xi_j^n}}{|\bar{\xi_j^n}-\xi_j^n|^2}\nu_{\xi_j^n}+(\beta_1^n-\beta_2^n)\sum_{j,k\in I\atop j\neq k}\bigg(\frac{\xi_k^n-\xi_j^n}{|\xi_j^n-\xi_k^n|^2}+\frac{\xi_k^n-\bar{\xi_j^n}}{|\bar{\xi_j^n}-\xi_k^n|^2}\bigg)\nu_{\xi_j^n}=O(1).\eeq
We now estimate each term of the above sum in order to get a contradiction.
First observe that $\xi-\bar{\xi}=2\di(\xi)\nu_{\xi}$,
 therefore 
\beq\label{gui1}\frac{\xi_j^n-\bar{\xi_j^n}}{|\bar{\xi_j^n}-\xi_j^n|^2}\nu_{\xi_j^n}=
\frac{1}{2\di(\xi_j^n)}\quad \forall j\in I.\eeq
On the other hand, since $\partial\Omega$ is of class    ${\cal C}^2$, for all $j,k\in I$ with $j\neq k$ we get $$\nu_{\xi_j^{n}}-\nu_{\xi_{k}^n}=O(|\xi_j^n-\xi_k^n|)$$
by which we deduce 
\beq\label{gui2}\sum_{j,k\in I\atop j\neq k}\frac{\xi_k^n-\xi_j^n}{|\xi_j^n-\xi_k^n|^2}\nu_{\xi_j^n}=\sum_{j,k\in I\atop j<k}\frac{\xi_k^n-\xi_j^n}{|\xi_j^n-\xi_k^n|^2}(\nu_{\xi_j^n}-\nu_{\xi_k^n})=O(1).\eeq
Moreover Corollary \ref{robin2} yields
\beq\label{gui3}\frac{\xi_k^n-\bar{\xi_j^n}}{|\bar{\xi_j^n}-\xi_k^n|^2}\nu_{\xi_j^n}=\frac{\di(\xi_j^n)+\di(\xi_k^n)}{|\bar{\xi}_j^n-\xi_k^n|^2}+O(1)\quad \forall j,k\in I,\, j\neq k.\eeq
By inserting the  estimates \eqref{gui1}, \eqref{gui2} and  \eqref{gui3} in \eqref{fiu} and using the assumptions $\beta_1^n+\beta_2^n\geq c>0$,  $\beta_1^n-\beta_2^n\geq o(1)$, we arrive at 
$$c\sum_{j\in I}\frac{1}{2\di(\xi_j^n)}+\sum_{j,k\in I\atop j\neq k}o(1)\frac{\di(\xi_j^n)+\di(\xi_k^n)}{|\bar{\xi_j^n}-\xi_k^n|^2}\leq O(1).$$ Taking into account of the obvious inequalities $|\bar{\xi_j^n}-\xi_k^n|\geq \di(\xi_j^n),\,\di(\xi_k^n)$, we deduce 
$$c\sum_{j\in I}\frac{1}{2\di(\xi_j^n)} +\sum_{j\in I}\frac{o(1)}{\di(\xi_j^n)}\leq O(1)$$  which is a contradiction. 

\bigskip

\noindent {\bf Case 2.}  \textit{$\beta_1^n-\beta_2^n\to 0$.}

\bigskip

 According to \eqref{ops} we have \beq\label{crib}\beta_1^n=\frac{\sqrt{2}}{2}+o(1),\qquad\beta_2^n=\frac{\sqrt{2}}{2}+o(1).\eeq Hence, in order to avoid case 1, we assume
\beq\label{asd}\di(\xi_j^n)\geq c>0\quad \forall j=1,\ldots,N.\eeq
Now we distinguish two cases:\begin{itemize}
\item[(a)] there exists $p\in Z$ and $j\in\{1,\ldots, N\}$ such that $\xi_j^n\to p$;
\item[(b)] for every $j$ we have $\xi_j^n\to\xi_j\in \Omega\setminus Z$.
 \end{itemize}
 
 First assume (a). We can split $\{1,\ldots,N\}=I\cup J$ where$$ I=\{j\,|\, \xi_j^n\to p\},\quad J=\{j\,|\, |\xi_j^n-p|\geq c\}.$$
Using Remark \ref{remrem}   the identities \eqref{leone} give
$$(\beta_1^n+\beta_2^n)\frac{\alpha_p}{2}\frac{\xi_j^n-p}{|\xi_j^n-p|^2}+\sum_{k\in I\atop k\neq j}(\beta_1^n-\beta_2^n)\frac{\xi_k^n-\xi_j^n}{|\xi_j^n-\xi_k^n|^2}=O(1)\quad \forall j\in I.$$
Let us multiply the above identity by $\xi_j^n-p$ and next sum in $j\in I$; using the following
\beq\label{idi}\sum_{j,k\in I\atop j\neq k}\frac{\xi_j^n-\xi_k^n}{|\xi_j^n-\xi_k^n|^2}(\xi_j^n-p)=\sum_{j,k\in I\atop j< k}1=\frac{\#I(\# I-1)}{2},\eeq where $\#I$ denotes the number of elements of $I$, 
we obtain
$$(\beta_1^n+\beta_2^n)\frac{\alpha_p}{2}\# I=
(\beta_1^n -\beta_2^n)\frac{\# I(\#I-1)}{2}+o(1).$$ 
The contradiction arises because of \eqref{crib}.

Now consider case (b).  According to Lemma \ref{assoassu} there exist $j_0\neq k_0$ such that $\xi_{j_0}=\xi_{k_0}$.   
  Taking into account of \eqref{asd}, we may evaluate
$$\Psi(\bbm[\xi]_n)= \frac{1}{4\pi}\sum_{j,k=1\atop j\neq k}^N\log\frac{1}{|\xi_j^n-\xi_k^n|}+O(1)\geq \frac{1}{2\pi}\log\frac{1}{|\xi_{j_0}^n-\xi_{k_0}^n|}+O(1)\to +\infty.$$ 

\bigskip

\noindent {\bf Case 3.} \textit{There exist  $\hat{\jmath}\neq\hat{k}$,  such that $\xi_{\hat{\jmath}}^n,\,\xi_{\hat{k}}^n\to\xi\in\Omega$ and $|\xi_{\hat{\jmath}}^n-\xi_{\hat{k}}^n|=o(|\xi_{\hat{\jmath}}^n-p|)$ for all $p\in Z$. 
}

\bigskip

We may split $\{1,\ldots, N\}=I\cup J$ where 
$$I=\big\{j\,\big|\, |\xi_{j}^n-\xi_{\hat{\jmath}}^n|\leq C|\xi_{\hat{\jmath}}^n-\xi_{\hat{k}}^n|\big\},\quad J=\big\{j\,\big|\,  |\xi_{\hat{\jmath}}^n-\xi_{\hat{k}}^n|=o(|\xi_j^n-\xi_{\hat{\jmath}}^n|)\big\}.$$
We observe that by construction $|\xi_{\hat{\jmath}}^n-\xi_{\hat{k}}^n|=o(|\xi_j^n-\xi_k^n|)$ for all $j\in I$ and $k\in J$, by which $$\nabla_xG(\xi_j^n, \xi_k^n)=o\bigg(\frac{1}{|\xi_{\hat{\jmath}}^n-\xi_{\hat{k}}^n|}\bigg)\quad \forall j\in I,\, k\in J.$$ Moreover, by hypothesis,  $|\xi_{\hat{\jmath}}^n-\xi_{\hat{k}}^n|=o(|\xi_j^n-p|)$ for all $j\in J$ and $p\in Z$, which implies  $$\nabla_xG(\xi_j^n, p)=o\bigg(\frac{1}{|\xi_{\hat{\jmath}}^n-\xi_{\hat{k}}^n|}\bigg)\quad \forall j\in I,\,p\in Z.$$
Then for any $j\in I$ the identity \eqref{leone} gives 
\beq\label{qqw10}(\beta_1^n-\beta_2^n)\sum_{k\in I\atop k\neq j}\frac{\xi_k^n-\xi_j^n}{|\xi_j^n-\xi_k^n|^2}=o\bigg(\frac{1}{|\xi_{\hat{\jmath}}^n-\xi_{\hat{k}}^n|}\bigg)\quad \forall j\in I.\eeq So we multiply \eqref{qqw10} by $\xi_j^n-\xi_{\hat{\jmath}}^n$ and sum in $j\in I$: by using the identity \eqref{idi} (with $\xi_{\hat{\jmath}}^n$ in the place of $p$), we obtain 
$$(\beta_1^n-\beta_2^n)\frac{\#I(\#I-1)}{2}=o(1) .$$ Taking into account that $\#I\geq 2$ since  $\hat{\jmath},\, \hat{k}\in I$, we deduce  $\beta_1^n-\beta_2^n=o(1)$ and we fall again in the previous case.
\bigskip

\noindent {\bf Case 4.} \textit{There exist $p\in Z$ and $\hat{\jmath}\neq \hat{k}$ such that $\xi_{\hat{\jmath}}^n,\, \xi_{\hat{k}}^n\to p$. 
}

\bigskip

Let us set $$I_p=\{j\,|\, \xi_j^n\to p\}\neq \emptyset.$$ We will split $I$ into $\ell$ pieces, with $\ell\geq 1$,
$$I_p=I_1\cup\ldots \cup I_\ell,\quad I_r\neq\emptyset,\;r=1,\ldots,\ell,$$ in such a way that
\beq\label{mario1}|\xi_j^n-p|\sim |\xi_k^n-p|\quad \forall j,k\in I_r\eeq and 
\beq\label{mario2}|\xi_j^n-p|=o(|\xi_k^n-p|)\quad \forall j\in I_r, \, \forall k\in I_{r+1}\cup\ldots\cup I_\ell.\eeq Here we use the notation $\sim$ to denote sequences which in the limit $n\to +\infty$ are of the same order. 
By construction we clearly have $|\xi_j^n-\xi_k^n|\sim |\xi_k^n-p|$ for all $j\in I_r$, $k\in I_{r+1}\cup\ldots\cup I_\ell$. On the other hand, in order to not fall again in the previous case, we may assume $|\xi_j^n-\xi_k^n|\sim  |\xi_k^n-p|$ for all $j, k\in I_r$. So we have
\beq\label{mario3}|\xi_j^n-\xi_k^n|\sim |\xi_k^n-p|\quad \forall j\in I_r,\;\;\forall k\in I_{r}\cup\ldots\cup I_{\ell} .\eeq
Combining \eqref{mario2}-\eqref{mario3} we get
\beq\label{mario33}\nabla_xG(\xi_j^n, \xi_k^n)=o\bigg(\frac{1}{|\xi_j^n-p|}\bigg)\quad \forall j\in I_r, \,\,\forall k\in I_{r+1}\cup\ldots\cup I_{\ell}.\eeq
Next consider $r\in\{1,\ldots,\ell\}$ and we write \eqref{leone} for $j\in I_r$: using \eqref{mario33}, we obtain 
\beq\label{qw2}(\beta_1^n-\beta_2^n)\sum_{k\in I_1\cup\ldots \cup I_r\atop k\neq j}\frac{\xi_j^n-\xi_k^n}{|\xi_j^n-\xi_k^n|^2}=(\beta_1^n+\beta_2^n)\frac{\alpha_p}{2}\frac{\xi_j^n-p}{|\xi_j^n-p|^{2}}+o\bigg(\frac{1}{|\xi_j^n-p|}\bigg)\quad \forall j\in I_r.\eeq 
By  \eqref{mario2} and \eqref{mario3} we find 
$|\xi_j^n-p|=o(|\xi_j^n-\xi_k^n|) $ $ j\in I_1\cup\ldots \cup I_{r-1}$ and  $k\in I_r$; consequently, by interchanging the roles of $j$ and $k$,  
we compute 
 \beq\label{obs}\frac{(\xi_j^n-\xi_k^n)(\xi_j^n-p)}{|\xi_j^n-\xi_k^n|^2}=1+\frac{(\xi_j^n-\xi_k^n)(\xi_k^n-p)}{|\xi_j^n-\xi_k^n|^2}=1+o(1)\quad \forall j\in I_r,\;\;\forall k\in I_1\cup\ldots \cup I_{r-1}.\eeq 
Now we multiply \eqref{qw2} by $\xi_j^n-p$ and sum in $j\in I_r$. By reasoning as in \eqref{idi} we get 
\beq\label{gra1}(\beta_1^n-\beta_2^n)\frac{\#I_1(\#I_1-1)}{2}=
(\beta_1^n+\beta_2^n)\frac{\alpha_p}{2}\#I_1+o(1),\eeq  for $r=1$ and, using \eqref{obs},   
\beq\label{gra2}(\beta_1^n-\beta_2^n)\bigg(\frac{\#I_r(\#I_r-1)}{2}+\#I_r(\#I_1+\ldots +\#I_{r-1})\bigg)=(\beta_1^n+\beta_2^n)\frac{\alpha_p}{2}\#I_r+o(1)\quad \forall r>1.\eeq
 Let us consider the identity \eqref{gra2} for $r=\ell$: taking into account that $\sum_{r=1}^\ell\#I_r=\#I_p\geq 2$ since $\hat{\jmath},\,\hat{k}\in I_p$, we find that both $\beta_1^n-\beta_2^n$ and $\beta_1^n+\beta_2^n$ are multiplied by positive quantity.
 Then,  using that at least one between $\beta_1^n-\beta_2^n$ and $\beta_1^n+\beta_2^n$ does not go to zero because of $(\beta_1^n)^2+(\beta_2^n)^2=1$,
 and recalling \eqref{ops},  we deduce
 \beq\label{bell}\beta_1^n-\beta_2^n,\, \beta_1^n+\beta_2^n\geq c>0.\eeq
 Furthermore, we also have 
 $$|\beta_1^n|,\, |\beta_2^n|\geq c>0.$$ Otherwise, if $\beta_1^n=o(1)$, by \eqref{ops}  we would obtain $\beta_2^n=1+o(1)$ and this contradicts \eqref{gra1}-\eqref{gra2}. On the other hand,   if $\beta_2^n=o(1)$, then $\beta_1^n=1+o(1)$ and, consequently, $\#I_1-1=\alpha_p$  by \eqref{gra1}, in contradiction with the assumption \eqref{akka2}. So we may set
 \beq\label{gra3}\frac{\beta_2^n}{\beta_1^n}=a(1+o(1))\hbox{ for some } a\neq 0.\eeq

Now we are going to evaluate separate pieces of the energy.
We begin with the following: 
$$\begin{aligned}&-\frac{\alpha_p}{2}\sum_{j\in I_p}G(\xi_j^n,p)+\frac12\sum_{j,k\in I_p\atop j\neq k} G(\xi_j^n,\xi_k^n)\\ &=-\frac{\alpha_p}{2}\sum_{r=1}^\ell \sum_{j\in I_r}G(\xi_j^n,p)+\frac12\sum_{r=1}^\ell\sum_{j,k\in I_r\atop j\neq k} G(\xi_j^n,\xi_k^n)+\sum_{r=1}^\ell\sum_{j\in I_r\atop k\in I_1\cup\ldots\cup I_{r-1}}G(\xi_j^n,\xi_k^n)\\ &=\sum_{r=1}^\ell \bigg(-\frac{\alpha_p}{2}\sum_{j\in I_r}\log\frac{1}{|\xi_j^n-p|}+\frac12\sum_{j,k\in I_r\atop j\neq k} \log\frac{1}{|\xi_j^n-\xi_k^n|}+\sum_{j\in I_r\atop k\in I_1\cup\ldots\cup I_{r-1}}\log\frac{1}{|\xi_j^n-\xi_k^n|}\bigg)+O(1).\end{aligned} $$
\eqref{mario3} implies  that, fixed $r$, all the the terms $ |\xi_j^n-\xi_k^n|$ and $|\xi_k^n-p|$ with  $j\in I_1\cup\ldots \cup I_{r}$ and $k\in I_r$  are of the same order. 
Then, by interchanging the roles of $j$ and $k$, we have $ |\xi_j^n-\xi_k^n|\sim |\xi_j^n-p|$ for all $j\in I_r$ and $k\in I_1\cup\ldots \cup I_{r}$.
Therefore, fixed $j_r\in I_r$ arbitrarily, we may write $$\begin{aligned}&-\frac{\alpha_p}{2}\sum_{j\in I_r}\log\frac{1}{|\xi_j^n-p|}+\frac12\sum_{j,k\in I_r\atop j\neq k} \log\frac{1}{|\xi_j^n-\xi_k^n|}+\sum_{j\in I_r\atop k\in I_1\cup\ldots\cup I_{r-1}}\log\frac{1}{|\xi_j^n-\xi_k^n|}\\ &=  \bigg(-\frac{\alpha_p}{2}\sum_{j\in I_r}1+\frac12\sum_{j,k\in I_r\atop j\neq k} 1+\sum_{j\in I_r\atop k\in I_1\cup\ldots\cup I_{r-1}}1\bigg)\log\frac{1}{|\xi_{j_r}^n-p|}+O(1)\\ &= \Big(-\frac{\alpha_p}{2}\#I_r+\frac{\#I_r(\#I_r-1)}{2}+ \#I_r(\#I_1+\ldots +\#I_{r-1})\Big)\log\frac{1}{|\xi_{j_r}^n-p|} +O(1)\\ &= (a+o(1))\Big(\frac{\alpha_p}{2}\#I_r+\frac{\#I_r(\#I_r-1)}{2}+ \#I_r(\#I_1+\ldots +\#I_{r-1})\Big)\log\frac{1}{|\xi_{j_r}^n-p|} +O(1)\end{aligned}$$
by \eqref{gra2} and \eqref{gra3}, and the last term goes to $+\infty$ if $a>0$ and $-\infty$ if $a<0$.
We have thus proved that \beq\label{puff}\frac{1}{a}\bigg(-\frac{\alpha_p}{2}\sum_{j\in I_p}G(\xi_j^n,p)+\frac12\sum_{j,k\in I_p\atop j\neq k} G(\xi_j^n,\xi_k^n)\bigg)\to +\infty\quad\forall p\in Z\hbox{ such that } I_p\neq \emptyset.\eeq In order to conclude it is sufficient to sum up all the previous information in order to estimate  the total $\Psi(\bbm[\xi]_n)$. Indeed, by \eqref{bell}, in order to not fall again in the case 1, we may assume
$$\di(\xi_j^n)\geq c>0\quad \forall j=1,\ldots, N.$$ Moreover, in order to avoid case 3, we assume $|\xi_j^n-\xi_k^n|\geq c>0$ if $(j,k)\not\in I_p\times I_p$ for any $p$, by which
$$G(\xi_j^n,\xi_k^n)=O(1)\quad \forall (j,k)\not\in \bigcup_{p\in Z} (I_p\times I_p).$$
Then we may evaluate
$$\begin{aligned}\Psi(\bbm[\xi]_n)=&-\sum_{p\in Z}\frac{\alpha_p}{2}\sum_{j=1}^NG(\xi_j^n,p)+\frac12\sum_{j,k=1\atop j\neq k}^N G(\xi_j^n, \xi_k^n)+O(1)\\ &=-\sum_{p\in Z }\frac{\alpha_p}{2}\sum_{j\in I_p}G(\xi_j^n,p)+\frac12\sum_{p\in Z}\sum_{j,k\in I_p\atop j\neq k} G(\xi_j^n, \xi_k^n)+O(1)\end{aligned}$$
 and  \eqref{puff} allows us to conclude 
 $$\frac{1}{a}\Psi(\bbm[\xi]_n)\to +\infty$$ in contradiction with \eqref{boupsi}.

\bigskip

\noindent {\bf Case 5.} \textit{Conclusion}.

\bigskip

According to Lemma \ref{assoassu} there exist $j_0\neq k_0$ such that  $$\xi_{j_0}^n,\;\xi_{k_0}^n\to \xi\in\overline{\Omega}$$ and $$|\xi_{j_0}^n-\xi_{k_0}^n|=o(\di(\xi_{j_0}^n)).$$ If $\xi\in Z$, then we are in the case 4. So let us assume $\xi\in \overline{\Omega}\setminus Z$.
We may split $\{1,\ldots, N\}=I\cup J$ where 
$$I=\{j\,|\, |\xi_j^n-\xi_{j_0}^n|\leq C|\xi_{j_0}^n-\xi_{k_0}^n|\},\quad J=\{j\,|\,  |\xi_{j_0}^n-\xi_{k_0}^n|=o(|\xi_j^n-\xi_{j_0}^n|)\}.$$
By construction we deduce 
\beq\label{pneu}|\xi_j^n-\xi_{j_0}^n|=o(\di(\xi_j^n))\quad \forall j\in I.\eeq Then, a direct application of Remark  \ref{remrem} yields
$$\nabla_x H(\xi_j^n, \xi_j^n)=O\bigg(\frac{1}{\di(\xi_j^n)}\bigg)=o\bigg(\frac{1}{|\xi_j^n-\xi_{j_0}^n|}\bigg)\quad \forall j\in I,$$ and, similarly, $$\nabla_xG(\xi_j^n, p)=o\bigg(\frac{1}{|\xi_{j}^n-\xi_{j_0}^n|}\bigg)\quad \forall j\in I,\;\forall p\in Z,$$

$$\nabla_x G(\xi_j^n,\xi_k^n)=
\frac{1}{2\pi}\frac{\xi_k^n-\xi_j^n}{|\xi_j^n-\xi_k^n|^2}+o\bigg(\frac{1}{|\xi_j^n-\xi_{j_0}^n|}\bigg)\quad \forall j,k\in I,\;j\neq k.$$
On the other hand by construction we also get \beq\label{pneu1}|\xi_{j}^n-\xi_{j_0}^n|=o(|\xi_j^n-\xi_k^n|)\quad \forall j\in I,\;\forall k\in J.\eeq  Then, by \eqref{pneu} and \eqref{pneu1},  using again Remark \ref{remrem}, 

 $$\nabla_x G(\xi_j^n,\xi_k^n)=\frac{1}{2\pi}\frac{\xi_k^n-\xi_j^n}{|\xi_j^n-\xi_k^n|^2}+O\bigg(\frac{1}{\di(\xi_j^n)}\bigg)=o\bigg(\frac{1}{|\xi_j^n-\xi_{j_0}^n|}\bigg)\quad \forall j\in I,\;\forall k\in J.$$
Summing up all the above estimates,  for any $j\in I$ the identity \eqref{leone} gives 
\beq\label{qqw1}(\beta_1^n-\beta_2^n)\sum_{k\in I\atop k\neq j}\frac{\xi_k^n-\xi_j^n}{|\xi_j^n-\xi_k^n|^2}=o\bigg(\frac{1}{|\xi_{j}^n-\xi_{j_0}^n|}\bigg)\quad \forall j\in I.\eeq Now we multiply \eqref{qqw1} by $\xi_j^n-\xi_{j_0}^n$ and sum in $j\in I$: by using the identity \eqref{idi} (which holds with  $\xi_{j_0}^n$ in the place of $p$), we obtain 
$$(\beta_1^n-\beta_2^n)\frac{\#I(\#I-1)}{2}=o(1) .$$ Taking into account that $\#I\geq 2$ since  $j_0,\,k_0\in I$, we deduce  $\beta_1^n-\beta_2^n=o(1)$ and we are back in the case 2.

\begin{remark}\label{compac1} The major problem in proving the validity of the property (P3) has been to exclude the possibility that two or more components of $(\xi_1^n,\ldots, \xi_N^n)$ collapse to the same point $p\in Z$ (see case 4). In particular 
the method breaks down if the collision arises 
 for uniformly bounded values of $\Psi$ and $\nabla \Psi$, which means that the blowing up forces exerted between the shrinking  points may balance. Indeed in such a situation it is difficult to rule out \eqref{exp} with the couple $(\beta_1^n,\beta_2^n)=(1,0)$ since the left hand side remains bounded. 
In particular such \textit{crucial} collision does occur when  $\alpha_p=1$ for some $p\in Z$ and  two of the $N$ points $\xi_j^n$ are located symmetrically   
with respect to $p$ at distance $\rho_n\to 0^+,$ for instance, $$\xi_1^n=p+\rho_n e^{i\theta},\quad \xi_2^n=p+\rho_n e^{i(\theta+\pi)},\quad \theta \in \R.$$
Furthermore, another collision of this type  does occur when $\alpha_p=2$ for some $p\in Z$ and 
3 of the $N$ points $\xi_j^n$ are arranged at the vertices  of a regular triangle of radius $\rho_n\to 0^+$, for instance, 
$$\xi_1^n=p+\rho_n e^{i\theta},\quad \xi_2^n=p+\rho_ne^{i(\theta+\frac{2}{3}\pi)},\quad\xi_3^n=p+\rho_n e^{i(\theta+\frac{4}{3}\pi)},\quad \theta\in\R.$$
More generally, if $\alpha_p=n\in \{1,\ldots ,N-1\}$ for some $p\in Z$ the collisions associated to the  $(n+1)$-polygonal configuration centered at $p$ may cause a lack of compactness. This explains the assumption \eqref{akka2}.

\end{remark}
\appendix
\renewcommand{\theequation}{\Alph{section}.\arabic{equation}}

\section{Some properties of the Green's function}
Let $\Omega$ be a bounded domain with a ${\cal C}^2$-boundary. We denote by $G(x,y)$ the
Green's function of $-\Delta$ on $\Omega$ under Dirichlet boundary conditions, and by $H(x,y)$
its regular part, as in the introduction. So $H$ satisfies
\begin{equation*}
\left\{
\begin{aligned}
&\Delta_yH(x,y)=0 &\hbox{ }&y\in\Omega\\
&H(x,y)=-\frac{1}{2\pi}\log\frac{1}{|x-y|} &\hbox{ }&y\in\partial \Omega
\end{aligned}
\right..
\end{equation*}
We recall that $H$ is a smooth function in $\Omega\times \Omega$, $G$ and $H$ are
symmetric in $x$ and $y$, and $G>0$  in $\Omega\times\Omega$.
Moreover by the comparison principle we get
\beq\label{matis}-\frac{1}{2\pi}\log\frac{1}{|x-y|}\leq H(x,y)\leq C\qquad \forall x,y\in\Omega.\eeq 
The diagonal $H(x,x)$ is called the Robin's function of the domain $\Omega$ and satisfies
\begin{equation}\label{coercive}
H(x,x)\to -\infty\quad \hbox{ as }{\rm{d}}(x):={\rm{dist}}(x,\partial\Omega)
\to 0.
\end{equation}
\begin{remark}\label{remrem} Using the equation satisfied by $H$, we can obtain   $$ |\nabla_x H(x,y)|
=  O\bigg(\frac{1}{\di(x)}\bigg)
$$ uniformly for $x,\, y\in \Omega$.  To prove this, let us compute
\begin{equation}\label{bouesti2}
\bigg|\frac{\partial H}{\partial x_1}(x,y)
\bigg|
= \bigg|\frac{x_1-y_1}{2\pi| x- y|^2}\bigg|
  \leq \frac{1}{2\pi \di(x)}
\quad \forall x\in \Omega,\; \forall y\in\partial\Omega.
\end{equation} 
Now, for $x\in \Omega$ fixed, the function
$\frac{\partial H}{\partial x_1}(x,y)
$
is harmonic in $\Omega$ with respect to the variable $y$,
and verifies \eqref{bouesti2} on the boundary. The maximum principle  gives
$$
\bigg|\frac{\partial H}{\partial x_1}(x,y)
 \bigg|
 \leq \frac{1}{2\pi \di(x)}\quad \forall x,y\in\Omega.
$$
An analogous argument applies to $\frac{\partial H}{\partial x_2}(x,y)$.
\end{remark}

We  need the following  result concerning the behavior of the regular part $H(x,y)$ near the
boundary. To this aim we fix $\delta>0$ sufficiently small such that the projection onto
$\partial \Omega$ is well defined for any $x\in \R^2$ with  $\di(x)<\delta$. Then, setting   $\Omega_0:=\{x\in\Omega:{\rm d}(x)<\delta\}$,
we denote this projection by $p:\Omega_0 \to \partial\Omega$. It is of class ${\cal C}^1$ because
$\partial\Omega$ is of class ${\cal C}^2$. Moreover, for $x\in \Omega_0$, we write
$\bar x=2p(x)-x$ for the reflection of $x$ at $\partial \Omega$ and
$\nu_x=\frac{x-p(x)}{|x-p(x)|}$ for the inward unit normal at $p(x)$.
Let us observe that 
\beq\label{crais}| x-y|\leq |\bar x-x|+|\bar x-y|=2\di(\bar x)+|\bar x-y|\leq 3|\bar x-y|\quad \forall x\in\Omega_0,\,\forall y\in \Omega.\eeq

\begin{lemma}\label{robin}
Let $\Omega$ be a bounded domain with a ${\cal C}^2$-boundary. Then the following expansions
hold uniformly for $x\in \Omega_0$ and $y\in \Omega$:
$$
H(x,y)=-\frac{1}{2\pi}\log\frac{1}{|\bar x-y|}+O(1),
$$
and
$$
\frac{\partial H}{\partial \nu_x}(x,y)
=-\frac{1}{2\pi}\frac{\partial}{\partial\nu_x}\bigg(\log\frac{1}{|\bar x-y|}\bigg)
  +O(1)=\frac{1}{2\pi}\frac{(y-\bar x)\cdot \nu_x}{|\bar x- y|^2}+O(1).
$$

\end{lemma}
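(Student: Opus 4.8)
The plan is to prove both expansions via the classical method of images: I would compare $H(x,\cdot)$ with the explicit harmonic function generated by the \emph{reflected} pole $\bar x$, and control the error through the maximum principle. Throughout I fix $x\in\Omega_0$, so that $\di(x)<\delta$ and the reflected point $\bar x=2p(x)-x$ lies in the exterior of $\overline\Omega$; in particular $y\mapsto\log\frac{1}{|\bar x-y|}$ is harmonic in $\Omega$.

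For the first identity I would introduce
$$w(y):=H(x,y)+\frac{1}{2\pi}\log\frac{1}{|\bar x-y|},\qquad y\in\overline\Omega,$$
which is harmonic in $\Omega$ since both summands are. On $\partial\Omega$ the boundary value $H(x,y)=-\frac{1}{2\pi}\log\frac{1}{|x-y|}$ gives $w(y)=\frac{1}{2\pi}\log\frac{|x-y|}{|\bar x-y|}$, so by the maximum principle the claim $w=O(1)$ reduces to a uniform two-sided bound
$$c\le\frac{|x-y|}{|\bar x-y|}\le C\qquad\forall\,x\in\Omega_0,\ \forall\,y\in\partial\Omega.$$
The upper bound is precisely \eqref{crais}. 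For the lower bound (and the finer control needed afterwards) I would pass to local coordinates centered at $p(x)$, writing $y=p(x)+z$: the ${\cal C}^2$ regularity of $\partial\Omega$ yields the crucial cancellation $\nu_x\cdot z=O(|z|^2)$, whence $\big||x-y|^2-|\bar x-y|^2\big|=4\,\di(x)\,|\nu_x\cdot z|=O(\di(x)|z|^2)$, while $|x-y|^2$ and $|\bar x-y|^2$ are both bounded below by $c(\di(x)^2+|z|^2)$. Distinguishing the regimes $|z|\le\di(x)$ and $|z|\ge\di(x)$ then pins the ratio of the squares in a fixed interval; for $y$ away from $p(x)$ the estimate is immediate. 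This proves the first expansion.

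For the normal derivative I would first observe that moving $x$ along $\nu_x$ leaves the projection $p(x)$ unchanged, so $\frac{\partial\bar x}{\partial\nu_x}=-\nu_x$ and hence
$$\frac{\partial}{\partial\nu_x}\Big(\frac{1}{2\pi}\log\frac{1}{|\bar x-y|}\Big)=-\frac{1}{2\pi}\frac{(y-\bar x)\cdot\nu_x}{|\bar x-y|^2},$$
which is exactly (minus) the announced leading term. Since $y\mapsto\frac{\partial w}{\partial\nu_x}(x,y)$ is again harmonic in $\Omega$, the maximum principle reduces the claim to bounding its boundary values $\frac{1}{2\pi}\frac{\partial}{\partial\nu_x}\log\frac{|x-y|}{|\bar x-y|}$ for $y\in\partial\Omega$. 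Differentiating,
$$\frac{\partial}{\partial\nu_x}\log\frac{|x-y|}{|\bar x-y|}=\frac{(x-y)\cdot\nu_x}{|x-y|^2}-\frac{(y-\bar x)\cdot\nu_x}{|\bar x-y|^2}.$$
In the local coordinates above one has $(x-y)\cdot\nu_x=\di(x)+O(|z|^2)$ and $(y-\bar x)\cdot\nu_x=\di(x)+O(|z|^2)$, the two numerators differing by $(x+\bar x-2y)\cdot\nu_x=-2\,z\cdot\nu_x=O(|z|^2)$, while the two denominators differ by $O(\di(x)|z|^2)$ and are both bounded below by $c(\di(x)^2+|z|^2)$. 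Combining the two fractions over a common denominator and tracking the surviving orders, the leading $\di(x)^{-1}$ singularities cancel and the remainder is $O(1)$ uniformly; the region away from $p(x)$ is harmless. The maximum principle then gives $\frac{\partial w}{\partial\nu_x}=O(1)$, which is the second expansion.

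The step I expect to be the main obstacle is this last cancellation. Each of the two boundary terms $\frac{(x-y)\cdot\nu_x}{|x-y|^2}$ and $\frac{(y-\bar x)\cdot\nu_x}{|\bar x-y|^2}$ is individually singular of order $\di(x)^{-1}$ as $x\to\partial\Omega$, so boundedness of their difference is not automatic: it must be extracted from the ${\cal C}^2$-flatness of $\partial\Omega$ at $p(x)$, through $\nu_x\cdot z=O(|z|^2)$, which forces both the numerators and the denominators to agree to leading order. Making this uniform in $x\in\Omega_0$ and $y\in\partial\Omega$, by a careful bookkeeping of the competing powers of $\di(x)$ and $|z|$, is the technical crux.
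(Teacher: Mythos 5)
Your proposal is correct and follows essentially the same route as the paper's own proof: comparison of $H(x,\cdot)$ with the reflected pole $\bar x$, boundary estimates obtained by straightening $\partial\Omega$ near $p(x)$ using its ${\cal C}^2$ regularity (your cancellation $\nu_x\cdot z=O(|z|^2)$ is exactly the paper's $|z_2|\le C|z_1|^2$), the maximum principle applied to the harmonic differences, and the identity $\partial\bar x/\partial\nu_x=-\nu_x$. Even your algebraic splitting of the two individually singular boundary terms (difference of numerators over one denominator, plus a numerator times the difference of reciprocal denominators) is the same decomposition the paper uses, so there is no genuine difference to report.
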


\begin{proof}
 For any $x\in \Omega_0$ we introduce a diffeomorphism which straightens
the boundary near $p(x)$. Let $T_x$ be a rotation and translation of coordinates which maps
$p(x)$ to $0$ and the unit inward normal $\nu_x$ to the vector ${\bf e}_2:=(0,1)$.
Then $T_x (x) = (0,{\rm{d}}(x))$, $T_x (\bar x) = (0,-{\rm{d}}(x))$,
and in some neighborhood of $0$ the boundary $\partial (T_x \Omega)$ can be represented by
$$
z_2=\rho_x(z_1);
$$
here $\rho_x$ is a $\cal C^2$-function satisfying $\rho_x(0)=0$ and $\rho'_x(0)=0$.
Therefore we have
$$
|z_2|\leq C |z_1|^2\quad \hbox{ on }\partial (T_x \Omega).
$$
First we prove the following estimate for the boundary points:
\begin{equation}\label{bouesti1}
\bigg|H(x,y)+\frac{1}{2\pi}\log\frac{1}{|\bar x-y|}\bigg|=\frac{1}{2\pi}\bigg|\log \frac{|x-y|}{|\bar  x-y|}\bigg|
 \leq C\quad \forall x\in\Omega_0,\;
\forall y\in \partial \Omega.
\end{equation}
In order to see this, we observe that for any  $x\in\Omega_0$ and $y\in\partial \Omega$, setting $z:=T_x(y)$, we have
\begin{equation}\label{dile2}
\max\{{\rm{d}}(x),|z_1|\} \le \min\{|x-y|,|\bar x-y|\},
\end{equation}
by which
\begin{equation}\label{lab}
\big||x-{y}|^{2}-|\bar x-y|^{2}\big|=4{\rm{d}}(x) |z_2|\leq C{\rm{d}}(x) |z_1|^2
 \leq C{\rm{d}}(x) \min\{|\bar x-{y}|^2, |x-y|^2\}.
\end{equation}
The above inequality implies
\begin{equation}\label{dill}
c\leq \frac{| x- y|}{|\bar x-y|}\leq C\quad \forall x\in \Omega_0, \;
\forall y\in\partial \Omega
\end{equation}
and \eqref{bouesti1} follows from \eqref{dill}. So, for any $x\in\Omega_0$,
the functions $H(x,y)+\frac{1}{2\pi}\log\frac{1}{|\bar x-y|}$ is 
 harmonic in $\Omega$ in the variable $y$, and verifies \eqref{bouesti1} on the boundary.
Then the maximum principle applies and gives
$$
\bigg|H(x,y)+\frac{1}{2\pi}\log\frac{1}{|\bar x-y|}\bigg|
\leq C
\quad \forall x\in \Omega_0, \;\forall y\in \Omega.
$$
The first part of the thesis follows.

We go on with the derivative estimate. We claim the following estimate on the boundary:
\begin{equation}\label{bouesti2l}
\bigg|\frac{\partial H}{\partial \nu_x}(x,y)
 -\frac{(y-\bar x)\cdot \nu_x}{2\pi|\bar x- y|^{2}}\bigg|
= \bigg|\frac{(x-y)\cdot \nu_x}{2\pi|x-y|^{2}}
  -\frac{(y-\bar x)\cdot \nu_x}{2\pi|\bar x-y|^{2}}\bigg|
\leq C
\quad \forall x\in \Omega_0,\; \forall y\in\partial\Omega.
\end{equation}
Indeed, using \eqref{dile2}-\eqref{lab}  we have
\begin{equation}\label{above}
\bigg|\frac{1}{|x-y|^{2}}-\frac{1}{|\bar x- y|^{2}}\bigg|
 \leq C\frac{{\rm{d}}(x)}{|\bar x- y|^2}\leq \frac{C}{|\bar x- y|}
 \quad \forall x\in \Omega_0,\; \forall y\in \partial \Omega.
\end{equation}
Moreover, for any  $x\in\Omega_0$ and  $y\in\partial \Omega$, setting  $z:=T_x(y)$, we get 
\begin{equation}\label{dile3}
\begin{aligned}
|(x-y)\cdot \nu_x-(y-\bar x)\cdot \nu_x|
 &=|({\rm{d}}(x) {\bf e}_2-z)\cdot {\bf e}_2-(z+{\rm{d}}(x) {\bf e}_2)\cdot {\bf e}_2|\\
 &=2|z_2|\leq C|z_1|^2\leq C| x-{y}|^2
\end{aligned}
\end{equation}
where for the last inequality we have used \eqref{dile2}.
Thus we obtain for $x\in \Omega_0$ and $y\in \partial \Omega$:
$$
\bigg|\frac{(x-y)\nu_x}{|x-y|^{2}}-\frac{( y-\bar x)\nu_x}{|\bar x- y|^{2}}\bigg|
 \leq |(y-\bar x) \nu_x|\bigg|\frac{1}{|x-y|^{2}}-\frac{1}{|\bar x- y|^{2}}\bigg|
       +\frac{|(x-y)\nu_x-(y-\bar x) \nu_x|}{| x- y|^{2}}
$$
and \eqref{bouesti2l} follows from  \eqref{above} and \eqref{dile3}.
Now, for $x\in \Omega_0$ fixed, the function
$\frac{\partial H}{\partial \nu_x}(x,y)-\frac{(y-\bar x)\cdot \nu_x}{2\pi|\bar x- y|^2}$
is harmonic in $\Omega$ with respect to the variable $y$,
and verifies \eqref{bouesti2l} on the boundary. The maximum principle applies and gives
$$
\bigg|\frac{\partial H}{\partial \nu_x}(x,y)
 -\frac{(y-\bar x)\cdot \nu_x}{2\pi|\bar x- y|^2}\bigg|
 \leq C\quad \forall x\in\Omega, \;\forall y\in \Omega_0.
$$
In order to conclude we observe that $\frac{\partial \bar x}{\partial \nu_x }=-\nu_x$, because
$\frac{\partial  p}{\partial \nu_x}(x)=0$ for any $x\in\Omega_0$, so that
$$
-\frac{\partial }{\partial \nu_x }\bigg(\log\frac{1}{|\bar x-y|}\bigg)
 =\frac{(y-\bar x)\cdot \nu_x}{|\bar x- y|^2}\quad \forall x\in\Omega_0,\;\forall y\in\Omega.
$$\end{proof}
Next two corollaries are devoted to estimate $G(x_n,y_n)$ on suitable sequences $x_n,\, y_n$.
\begin{corollary}\label{robin1} Let $\Omega$ be a bounded domain with a ${\cal C}^2$-boundary and let  $x_n, \, y_n\in \Omega$  be such that $|x_n-y_n|\geq c\di(x_n)$ for any $ n$. Then 
$$G(x_n, y_n)=O(1).$$
\end{corollary}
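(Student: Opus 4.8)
The plan is to work directly from the decomposition $G(x,y)=H(x,y)+\frac{1}{2\pi}\log\frac{1}{|x-y|}$ and to control $H$ by the two complementary estimates already available: the global upper bound \eqref{matis} when $x_n$ stays away from $\partial\Omega$, and the reflection expansion of Lemma \ref{robin} when $x_n$ approaches $\partial\Omega$. Since $G>0$ on $\Omega\times\Omega$, the lower bound $G(x_n,y_n)\ge 0$ is automatic, so the entire content of the statement is the uniform \emph{upper} bound $G(x_n,y_n)\le C$.

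I would split the sequence into two subsequences according to the size of $\di(x_n)$, with $\delta>0$ the number fixed in the definition of $\Omega_0$. On the subsequence where $\di(x_n)\ge\delta$, the hypothesis forces $|x_n-y_n|\ge c\,\di(x_n)\ge c\delta$, so $|x_n-y_n|$ is bounded away from zero and $\frac{1}{2\pi}\log\frac{1}{|x_n-y_n|}$ is bounded above. Combined with $H(x_n,y_n)\le C$ from \eqref{matis}, this gives $G(x_n,y_n)=O(1)$ on this subsequence with no further work.

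The substantive case is the subsequence with $\di(x_n)<\delta$, i.e.\ $x_n\in\Omega_0$, where Lemma \ref{robin} applies and yields $H(x_n,y_n)=\frac{1}{2\pi}\log|\bar x_n-y_n|+O(1)$. Substituting into the decomposition gives
$$
G(x_n,y_n)=\frac{1}{2\pi}\log\frac{|\bar x_n-y_n|}{|x_n-y_n|}+O(1),
$$
so it suffices to bound the ratio $|\bar x_n-y_n|/|x_n-y_n|$ from above. Here I would use the triangle inequality $|\bar x_n-y_n|\le|\bar x_n-x_n|+|x_n-y_n|=2\di(x_n)+|x_n-y_n|$ together with the hypothesis $|x_n-y_n|\ge c\,\di(x_n)$, which produces $\frac{|\bar x_n-y_n|}{|x_n-y_n|}\le 1+\frac{2}{c}$. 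Hence the logarithm is bounded above by a constant depending only on $c$, and $G(x_n,y_n)=O(1)$ on this subsequence as well.

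The delicate point is precisely this near-boundary regime: the hypothesis permits $|x_n-y_n|\to 0$, so the singular term $\frac{1}{2\pi}\log\frac{1}{|x_n-y_n|}$ may diverge to $+\infty$, and what saves the bound is the compensating negative divergence of the Robin function, made quantitative by the reflection estimate of Lemma \ref{robin}. The condition $|x_n-y_n|\ge c\,\di(x_n)$ is exactly the geometric hypothesis that keeps the surviving ratio $|\bar x_n-y_n|/|x_n-y_n|$ bounded, i.e.\ that prevents $y_n$ from sitting much closer to $x_n$ than $x_n$ sits to the boundary; this is where I expect the main obstacle to lie, and it is resolved by the elementary estimate above rather than by any additional machinery.
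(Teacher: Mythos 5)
Your proof is correct and follows essentially the same route as the paper: both rest on the decomposition $G=\frac{1}{2\pi}\log\frac{1}{|x-y|}+H$, the reflection expansion of Lemma \ref{robin} when $x_n$ is near $\partial\Omega$, and the triangle inequality $|\bar x_n-y_n|\le 2\di(x_n)+|x_n-y_n|\le C|x_n-y_n|$ forced by the hypothesis. The only differences are organizational: you split cases by the size of $\di(x_n)$ rather than of $|x_n-y_n|$, and you invoke the positivity $G>0$ for the lower bound where the paper instead obtains a two-sided bound on the ratio $|\bar x_n-y_n|/|x_n-y_n|$ via \eqref{crais}.
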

\begin{proof} The thesis is obvious if $|x_n-y_n|\geq c$, since, by \eqref{matis}, $H(x_n,y_n)=O(1)$ and, consequently, $$G(x_n,y_n)= \frac{1}{2\pi}\log\frac{1}{|x_n-y_n|}+H(x_n, y_n)=O(1).$$
Next assume $|x_n-y_n|=o(1)$, which implies $\di(x_n)=o(1)$ by hypothesis. Then we compute $$|\bar x_n-y_n|\leq |\bar x_n-x_n|+|x_n-y_n|=2\di(x_n)+|x_n-y_n|\leq C|x_n-y_n|.$$
So, combining this with  \eqref{crais},  $$c\leq \frac{|\bar x_n-y_n|}{|x_n-y_n|}\leq C$$by which, using Lemma \ref{robin}, 
$$G(x_n, y_n)=\frac{1}{2\pi}\log\frac{|\bar x_n-y_n|}{|x_n-y_n|}+O(1)=
O(1).$$

\end{proof}
\begin{corollary}\label{robin2}  Let $\Omega$ be a bounded domain with a ${\cal C}^2$-boundary and let   $x_n, \,y_n\in \Omega$  be such that $x_n,y_n\to \bar x\in\partial \Omega$. 
Then $$\frac{\partial H}{\partial \nu_{x_n}}(x_n,y_n)=\frac{1}{2\pi}\frac{(y_n-\bar x_n)\cdot \nu_{x_n}}{|\bar x_n- y_n|^2}+O(1)=\frac{1}{2\pi}\frac{\di(x_n)+\di(y_n)}{|\bar{x}_n-y_n|^2}+O(1).$$

 \end{corollary}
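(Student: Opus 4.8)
The plan is to obtain the first identity as an immediate instance of Lemma \ref{robin}, and then to reduce the second identity to a single geometric estimate in the straightening coordinates. Since $x_n\to \bar x\in\partial\Omega$ forces $\di(x_n)\to 0$, we have $x_n\in\Omega_0$ for $n$ large, so Lemma \ref{robin} applies with $x=x_n$, $y=y_n$ and gives directly $\frac{\partial H}{\partial\nu_{x_n}}(x_n,y_n)=\frac{1}{2\pi}\frac{(y_n-\bar x_n)\cdot\nu_{x_n}}{|\bar x_n-y_n|^2}+O(1)$. Thus everything reduces to showing that the two displayed fractions differ by $O(1)$, i.e. that $(y_n-\bar x_n)\cdot\nu_{x_n}=\di(x_n)+\di(y_n)+O(|\bar x_n-y_n|^2)$.

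Next I would pass to the coordinates $T_{x_n}$ introduced in the proof of Lemma \ref{robin}, in which $T_{x_n}(p(x_n))=0$, $T_{x_n}(x_n)=(0,\di(x_n))$, $T_{x_n}(\bar x_n)=(0,-\di(x_n))$ and $\nu_{x_n}$ becomes ${\bf e}_2$. Writing $z=(z_1,z_2):=T_{x_n}(y_n)$, a one-line computation using that $T_{x_n}$ is an isometry gives $(y_n-\bar x_n)\cdot\nu_{x_n}=z_2+\di(x_n)$ and $|\bar x_n-y_n|^2=z_1^2+(z_2+\di(x_n))^2$; note that $z_2=(y_n-p(x_n))\cdot\nu_{x_n}$ is exactly the distance from $y_n$ to the tangent line of $\partial\Omega$ at $p(x_n)$. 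Hence the claim is equivalent to the purely geometric estimate $z_2-\di(y_n)=O(|\bar x_n-y_n|^2)$.

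To prove this I would compare $z_2$ with the true distance $\di(y_n)$ using the ${\cal C}^2$ graph $z_2=\rho_{x_n}(z_1)$ of $\partial\Omega$, which satisfies $\rho_{x_n}(0)=\rho_{x_n}'(0)=0$ and $|\rho_{x_n}(w)|\le Cw^2$ uniformly in $n$. The boundary point vertically below $y_n$ yields the easy bound $\di(y_n)\le z_2-\rho_{x_n}(z_1)=z_2+O(z_1^2)$. For the reverse inequality I would examine the nearest boundary point $(w_1,\rho_{x_n}(w_1))$: the first-order optimality condition $z_1-w_1=-(z_2-\rho_{x_n}(w_1))\rho_{x_n}'(w_1)$, together with $|z_2-\rho_{x_n}(w_1)|\le\di(y_n)$ and $|\rho_{x_n}'(w_1)|\le C|w_1|$ (from $\rho_{x_n}'(0)=0$ and the uniform ${\cal C}^2$ bound), shows that the tangential shift is quadratically small, $|z_1-w_1|=O\big(\di(y_n)(|z_1|+\di(y_n))\big)$. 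Inserting this into $z_2=\rho_{x_n}(w_1)+\sqrt{\di(y_n)^2-(z_1-w_1)^2}$ gives $z_2-\di(y_n)=O(z_1^2+\di(y_n)^2)$. Finally $z_1^2\le|\bar x_n-y_n|^2$ holds trivially, while $\di(y_n)\le|\bar x_n-y_n|$ because the segment joining $y_n\in\Omega$ to the reflected point $\bar x_n\notin\overline\Omega$ must cross $\partial\Omega$; hence $z_1^2+\di(y_n)^2=O(|\bar x_n-y_n|^2)$ and the estimate follows.

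The hard part is precisely the geometric estimate $z_2-\di(y_n)=O(|\bar x_n-y_n|^2)$: the quantity $z_2$ measures the distance to the tangent line, whereas $\di(y_n)$ is the distance to the curved boundary, and the point is that this discrepancy is genuinely of second order. Establishing the quadratic gap is what forces one to use both the first-order flatness $\rho_{x_n}'(0)=0$ and the uniform ${\cal C}^2$ curvature bound in order to control the location $w_1$ of the foot of the perpendicular; everything else is bookkeeping once these estimates are in place.
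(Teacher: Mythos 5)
Your proposal is correct, and for the substantive second equality it takes a genuinely different route from the paper. The paper never passes to the straightening coordinates $T_{x_n}$; instead it decomposes
$(y_n-\bar x_n)\cdot\nu_{x_n}=(y_n-p(y_n))\cdot\nu_{x_n}+(p(y_n)-p(x_n))\cdot\nu_{x_n}+\di(x_n)$
and exploits two consequences of the ${\cal C}^2$ regularity of $\partial\Omega$: that $\nu_{x_n}-\nu_{y_n}=O(|x_n-y_n|)$, so the first term equals $(1+O(|x_n-y_n|))\di(y_n)$, and that $p(y_n)-p(x_n)$ is tangential at $p(x_n)$ up to a relative error $O(|x_n-y_n|)$, so the middle term is $O(|x_n-y_n|^2)$; it then divides by $|\bar x_n-y_n|^2$ using \eqref{crais} and $|\bar x_n-y_n|\geq\di(y_n)$, exactly as you do at the end. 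Your argument instead reduces everything to the single estimate $z_2-\di(y_n)=O(|\bar x_n-y_n|^2)$ in the graph coordinates of Lemma \ref{robin}, proved by a two-sided comparison of the tangent-line distance with the true distance via the foot of the perpendicular. The trade-off: the paper's proof is shorter because it treats the smoothness of the projection $p$ and the near-tangentiality of $p(y_n)-p(x_n)$ as known facts about ${\cal C}^2$ boundaries, whereas yours is more self-contained, deriving the second-order discrepancy directly from $\rho_{x_n}(0)=\rho_{x_n}'(0)=0$ and the uniform ${\cal C}^2$ bound, at the cost of the optimality-condition bookkeeping for $w_1$. Both proofs hinge on the same underlying geometric fact (the gap between tangent-plane distance and boundary distance is quadratic), and both close the argument identically with $|x_n-y_n|\le 3|\bar x_n-y_n|$ and $\di(y_n)\le|\bar x_n-y_n|$.
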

 \begin{proof}
 Since $\partial\Omega$ is of class    ${\cal C}^2$,  we get $$p(x_n)-p(y_n),\;\nu_{x_n}-\nu_{y_n}=O(|x_n-y_n|)$$ and 
 $$\frac{p(y_n)-p(x_n)}{|p(y_n)-p(x_n)|}=\pm \tau_{x_n}+O(|y_n-x_n|)$$ where $\tau_{x}$ denotes the tangent vector at $p(x)$; using these two facts we compute
$$\begin{aligned}(y_n-\bar{x}_n)\nu_{x_n}&=(y_n-2p(x_n)+x_n)\nu_{x_n}=
(y_n-p(x_n))\nu_{x_n}+\di(x_n)\\ &=(y_n-p(y_n))\nu_{x_n}+(p(y_n)-p(x_n))\nu_{x_n}+\di(x_n)\\ &=\di(y_n)\nu_{y_n}\nu_{x_n}+O(|y_n-x_n|^2)+\di(x_n)\\ &=(1+O(|x_n-y_n|))\di(y_n)+O(|y_n-x_n|^2)+\di(x_n).\end{aligned}$$ 
Then \eqref{crais}  together with the obvious inequality $|\bar{x}_n-y_n|\geq \di(y_n)$ yields 
$$\frac{y_n-\bar{x}_n}{|\bar{x}_n-y_n|^2}\nu_{x_n} =
\frac{\di(x_n)+\di(y_n)}{|\bar{x}_n-y_n|^2}+O(1)
.$$
 \end{proof}

\end{document}